\newcommand{\dom}{\operatorname{dom}}
\newcommand{\comment}[1]{}
\newcommand{\lex}{<_{\textrm{lex}}}
\newcommand{\PFA}{\mathrm{PFA}}
\newcommand{\CH}{\mathrm{CH}}
\newcommand{\GCH}{\mathrm{GCH}}
\newcommand{\ZFC}{\mathrm{ZFC}}
\theoremstyle{plain}
\newtheorem{thm}{Theorem}[section]
\newtheorem{lem}[thm]{Lemma}
\newtheorem{prop}[thm]{Proposition}
\newtheorem{cor}[thm]{Corollary}
\newtheorem{fact}[thm]{Fact}
\newtheorem{question}[thm]{Question}
\theoremstyle{definition}
\newtheorem{defn}[thm]{Definition}
\newtheorem{notation}[thm]{Notation}
\begin{document}

\author[H. Lamei Ramandi]{Hossein Lamei Ramandi}

\address{Department of Mathematics \\ University of Toronto,
Toronto \\ Canada}

\title[A New Non-$\sigma$-Scattered Order]{A New Minimal Non-$\sigma$-Scattered Linear Order}

\subjclass[2010]{03E05}
\keywords{ Kurepa trees, club embedding, non-$\sigma$-scattered.}

\email{{\tt hossein@math.toronto.edu}}

\begin{abstract}
We will show it is consistent with $\GCH$ that there is a  minimal non-$\sigma$-scattered linear order 
which does not contain any real or Aronszajn type. In particular the 
assumption $\PFA^+$ in the main result of \cite{no_real_Aronszajn} is necessary, 
and there are other obstructions than real and Aronszajn types to the sharpness
of Laver's theorem in \cite{Fraisse_otp_conj}. 
\end{abstract}

\maketitle

\section{Introduction}

Fra\"\i ss\'e in \cite{Fraisse}, conjectured that every descending sequence of countable order types is 
finite, and every antichain of countable order types is finite. That is, the class of countable linear orders is 
\emph{well quasi ordered}. Laver confirmed the conjecture by proving a stronger statement.
\begin{thm}\cite{Fraisse_otp_conj} \label{laver}
The class of $\sigma$-scattered linear orders  is well quasi ordered. In particular every descending chain
of $\sigma$-scattered linear orders is finite.
\end{thm}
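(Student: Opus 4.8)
The plan is to deduce this from Nash-Williams' theory of \emph{better-quasi-orders} (bqo), a strengthening of well-quasi-order that, unlike wqo, is closed under the infinitary sum and sequence operations one needs here. Since every bqo is in particular a wqo, and a wqo has neither an infinite strictly descending chain nor an infinite antichain, it suffices to prove the stronger assertion that the class of $\sigma$-scattered linear orders, quasi-ordered by embeddability, is bqo. To make a transfinite induction run, I would in fact prove the \emph{labeled} statement: for every bqo $Q$, the class $\mathcal{M}(Q)$ of $\sigma$-scattered orders whose points carry labels from $Q$, ordered by label-respecting embedding, is bqo. Theorem~\ref{laver} is then the special case where $Q$ is a single point.

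Second, I would assemble the bqo toolkit: define barriers and $Q$-arrays $f\colon B\to Q$ on them, call $f$ \emph{bad} if no $s\vartriangleleft t$ satisfies $f(s)\le f(t)$, and declare $Q$ bqo if it admits no bad array. The two facts I would invoke are Nash-Williams' \emph{minimal bad array lemma} (one may choose a bad array whose values are rank-minimal) and the closure of bqo under the relevant constructions: finite sums and products, and most importantly the operation sending a bqo $Q$ to the class of transfinite $Q$-labeled well-orders and their reverses, i.e. $\mathbb{Z}$-, $\omega$- and $\omega^{*}$-indexed sums of elements of $Q$.

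Third comes the core induction. By Hausdorff's structure theorem each scattered order is built from singletons by iterated $\omega$- and $\omega^{*}$-sums, which assigns it an ordinal rank; the $\sigma$-scattered orders add one further layer, being countable unions of scattered pieces. Supposing $\mathcal{M}(Q)$ were not bqo, I would fix via the minimal bad array lemma a bad array $f\colon B\to\mathcal{M}(Q)$ whose values have minimal rank. Decomposing each $f(s)$ into its lower-rank summands, minimality forces the class of summands to be bqo; but then each $f(s)$ is itself a $\mathbb{Z}$-type sum of elements of this smaller bqo, so the sum-closure property makes the collection of such sums bqo, producing a good pair $f(s)\le f(t)$ and contradicting badness.

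Finally, the main obstacle. The whole difficulty is concentrated in the fact that wqo is genuinely \emph{not} preserved by these sum operations (Rado's example exhibits a wqo whose ``square'' fails to be wqo), so a naive induction on rank collapses; it is precisely to repair this that the stronger notion of bqo and the minimal-bad-array machinery are indispensable, and verifying that bqo really is closed under transfinite labeled sums is the technical heart. The subtler point I expect to fight is the bookkeeping: ensuring that a good pair produced at the level of the coding sequences of summands lifts to an honest order-embedding between the original $\sigma$-scattered orders, i.e. that the combinatorial comparison respects the linear arrangement of the summands and the $Q$-labels simultaneously.
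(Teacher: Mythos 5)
This theorem is not proved in the paper at all: it is Laver's theorem, quoted with a citation to \cite{Fraisse_otp_conj}, so the only proof to compare against is Laver's original one. Your proposal is essentially that proof --- the strengthening from wqo to Nash-Williams' better-quasi-orders (forced on one precisely by Rado's example), the labeled version $\mathcal{M}(Q)$ to make the induction close, the minimal bad array lemma, and the Hausdorff-style hierarchy of sums --- with the one caveat that uncountable scattered orders such as $\omega_1$ require arbitrary well-ordered and reverse well-ordered sums rather than only $\omega$- and $\omega^*$-sums, a point your toolkit paragraph states correctly even though your induction paragraph elides it.
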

Here the class of linear orders is considered with the quasi order of embeddability. Recall that a linear
order $L$ is said to be scattered if it does not contain a copy of the rationals. 
$L$ is called $\sigma$-scattered 
if it is a countable union of scattered linear orders.
At the end of his paper, Laver asks about the behavior of non-$\sigma$-scattered 
linear orders under embeddability.
For instance 
\emph{to what extent Laver's theorem is sharp?} If the answer to this question is independent of $\ZFC$, 
\emph{what are the 
obstructions to the sharpness of Laver's theorem?}

Not very long
after Laver proved Theorem \ref{laver}, various theorems in the direction of showing that Laver's theorem is consistently not sharp were proved. Baumgartner in \cite{reals_isomorphic},
showed that it is consistent that all $\aleph_1$-dense sets 
of the reals are isomorphic.
In the same paper, he mentions that one can add all $\aleph_1$ sized subsets of the reals to 
the class of all $\sigma$-scattered linear orders in order to obtain a class $\mathcal{L}$
of linear orders such that $\mathcal{L}$ is
strictly larger than the class of $\sigma$-scattered linear orders, $\mathcal{L}$ is closed under taking suborders 
and it is consistent that $\mathcal{L}$ is well quasi ordered.

Another result in the direction of 
``Laver's theorem is consistently not sharp" is due to Abraham and Shelah.
In \cite{club_isomorphic}, they showed that $\PFA$, the 
proper forcing axiom, implies that every two non-stationary Countryman lines are either isomorphic or reverse 
isomorphic. An Aronszajn line $A$ is said to be non-stationary if there is a continuous increasing sequence
$\langle A_\xi : \xi \in \omega_1 \rangle$ of countable subsets of $A$ which covers $A$
such that for each $\xi \in \omega_1$
no maximal interval of $A \setminus A_\xi$ has a least or greatest element.
Since every Countryman line contains an uncountable non-stationary suborder,
one can even considers a larger class of linear orders than what Baumgartner and 
Laver considered and still have a class of linear orders which is consistently well quasi ordered and 
which is closed under taking suborders. 
Later Martinez-Ranero 
in \cite{A-line_wqo} showed that under $\PFA$ the class of all Aronszajn lines is 
well quasi ordered.

Baumgartner seems to be the first person who considered the other side of Laver's question, i.e, to what 
extent is Laver's theorem sharp?
In \cite{new_class_otp}, he introduces a class of non-$\sigma$-scattered linear orders and
proves in $\ZFC$ that his examples are not minimal with respect to being non-$\sigma$-scattered. Baumgartner's 
example can be described as follows. Let $L=\{C_\alpha : \alpha \in S \}$ ordered lexicographically, where 
$S$ is a stationary subset of $\omega_1$ consisting of limit ordinals and $C_\alpha$ is a cofinal sequence 
in $\alpha$ that has order type $\omega$. 
Baumgartner's example $L$ has the property that a suborder $\{C_\xi : \xi \in A \}$ is $\sigma$-scattered if and 
only if
$A$ is not stationary. This together with pressing down lemma implies that
if $f:L \longrightarrow L$ is an embedding
then the set $\{\xi \in S: f(C_\xi) \neq C_\xi \}$ is not stationary. Therefore if 
$S_0 \subset S$ is such that $S\setminus S_0$ and $S_0$ are stationary then $L$ does not embed into the 
linear order corresponding to $S_0$. In this paper \emph{Baumgartber type} refers to Baumgartner's examples or the 
revers of them.

The behavior of Baumgartner types inspired Ishiu and Moore to 
generalize the situation above for a broader class of linear 
orders and prove the following theorem.
\begin{thm} \cite{no_real_Aronszajn} \label{no_real_Aronszajn}
$\PFA^+$ implies that every minimal non-$\sigma$-scattered linear order is either a real or a Countryman type.
\end{thm}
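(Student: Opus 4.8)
The plan is to exploit minimality together with the classical structural dichotomy for linear orders, reducing everything to the question of which size-$\aleph_1$ pieces a non $\sigma$-scattered order must contain. First I would reduce to the case $|L| = \aleph_1$: it is a standard reflection fact, provable in $\ZFC$, that every non $\sigma$-scattered order has a non $\sigma$-scattered suborder of size $\aleph_1$, so by minimality $L$ embeds into such a suborder and hence $|L| = \aleph_1$.

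The heart of the argument is a case split according to whether $L$ contains an uncountable separable suborder. The underlying structural lemma, essentially Laver's analysis of scattered orders, is that every non $\sigma$-scattered order of size $\aleph_1$ contains either a real type (an uncountable separable suborder, which after thinning may be taken to be an uncountable set of reals) or an Aronszajn suborder. I would prove this by quotienting $L$ by the relation ``$[x,y]$ is $\sigma$-scattered'': the quotient must be non-scattered, since otherwise a scattered-indexed sum of $\sigma$-scattered pieces would make $L$ itself $\sigma$-scattered, so the quotient contains a copy of $\Qbb$, and developing this copy of $\Qbb$ yields either a real type or, in the absence of uncountable separable suborders, an Aronszajn line. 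Crucially, both alternatives are themselves non $\sigma$-scattered: a real type is an uncountable subset of $\Rbb$ and every scattered subset of $\Rbb$ is countable, while an Aronszajn line contains no $\omega_1$, no $\omega_1^*$ and no real type, and an uncountable scattered order must contain $\omega_1$ or $\omega_1^*$.

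With the dichotomy in hand, minimality does the rest. If $L$ contains a real type $R$, then $R$ is a non $\sigma$-scattered suborder, so by minimality $L$ embeds into $R \subseteq \Rbb$ and $L$ is itself a real type. Otherwise $L$ contains an Aronszajn suborder $A$; again $A$ is non $\sigma$-scattered, so $L$ embeds into $A$ and $L$ is Aronszajn. To upgrade this to a Countryman type I would invoke the structure theory of Aronszajn lines under the forcing axiom: via Todorcevic's work on coherent Aronszajn trees and Countryman lines, $\PFA^+$ implies that every Aronszajn line contains a Countryman suborder $C$. Since Countryman lines are Aronszajn and therefore non $\sigma$-scattered, minimality forces $L$ to embed into $C$, and an uncountable suborder of a Countryman line is again Countryman; hence $L$ is a Countryman type.

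I expect this last step to be the main obstacle, and it is exactly where the strength of $\PFA^+$ (rather than bare $\PFA$ or $\ZFC$) is consumed. In $\ZFC$ there can be Aronszajn lines with no Countryman suborder, so the passage from ``Aronszajn'' to ``Countryman'' is genuinely axiom-dependent and rests on reflection and coherence arguments for Aronszajn trees; pinning down where $\PFA$ must be strengthened to $\PFA^+$ is itself delicate. The points I would be most careful about are that the Countryman suborder really lies below $L$ in the embeddability quasi order (so that minimality applies), and that the quotient and sub-witness constructions extracting the Aronszajn or real piece can be arranged to preserve non $\sigma$-scatteredness at size $\aleph_1$; this bookkeeping is where the work concentrates. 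Indeed, the present paper's consistency result shows that this reliance on $\PFA^+$ cannot be removed.
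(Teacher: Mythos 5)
Your skeleton (reduce to $|L|=\aleph_1$, locate a nice non $\sigma$-scattered suborder, apply minimality) is reasonable, and your final steps are sound: real types and Aronszajn lines are indeed non $\sigma$-scattered, and passing from an Aronszajn suborder to a Countryman one and then invoking minimality is exactly how that case goes. But the step you call the heart of the argument is false: it is \emph{not} a theorem of $\ZFC$ that every non $\sigma$-scattered linear order of size $\aleph_1$ contains a real type or an Aronszajn suborder. Baumgartner's examples from \cite{new_class_otp}, recalled in the introduction of this paper, are $\ZFC$ counterexamples: for a stationary $S\subset\omega_1$ of limit ordinals, the order $L=\{C_\alpha:\alpha\in S\}$ under the lexicographic order is non $\sigma$-scattered, yet every uncountable suborder of it contains a copy of $\omega_1$; consequently it has no uncountable separable suborder and no Aronszajn suborder, since both of those exclude $\omega_1$. (This is precisely why Galvin's question, quoted in the introduction, is non-vacuous.) Your quotient argument cannot repair this: the copy of $\Qbb$ it produces lives in the quotient, whose preimage is a $\Qbb$-indexed sum of $\sigma$-scattered convex classes, and nothing in that picture yields an uncountable suborder avoiding $\omega_1$ and $\omega_1^*$ --- in Baumgartner's example none exists. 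Note also that the main result of the present paper, Theorem \ref{main}, shows the dichotomy can fail even for a \emph{minimal} non $\sigma$-scattered order, consistently with GCH; if your structural lemma were provable in $\ZFC$, Theorem \ref{main} would be outright inconsistent.

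This gap also misplaces where the axiomatic strength is spent. The Aronszajn-to-Countryman step needs only $\PFA$ (Moore's five-element basis theorem), not the ``$+$''. What consumes $\PFA^+$ in \cite{no_real_Aronszajn} is exactly the case your lemma erases: one must show that a minimal non $\sigma$-scattered order cannot behave like Baumgartner's examples, i.e.\ that a non $\sigma$-scattered $L$ containing no real and no Countryman suborder is never minimal. Ishiu and Moore do this with the capturing machinery recalled in Section \ref{back} (the invariants $\Omega$ and $\Gamma$, and amenability): inside such an $L$ one finds a non $\sigma$-scattered suborder $L_0$ with $\Omega(L_0)$ not equivalent to $\Omega(L)$, and one uses a proper forcing together with the stationary-set-reflection clause of $\PFA^+$ to certify in the ground model that $L$ does not embed into $L_0$. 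That argument, not a $\ZFC$ basis dichotomy, is the substance of the theorem, and your proposal omits it entirely.
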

In other words under $\PFA^+$, the only obstructions to the sharpness of Laver's theorem are real and Countryman
types. This breakthrough should be considered with the 
following result. 
\begin{thm} \cite{minimal_unctbl_types} \label{omega_1}
It is consistent with $\CH$ that $\omega_1$ and $\omega_1^*$ are the only linear orders that are minimal 
with respect to being uncountable. 
\end{thm}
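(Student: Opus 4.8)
The plan is to reduce the statement to a $\ZFC$ classification of uncountable linear orders and then force away the unwanted minimal examples while preserving $CH$. First I would invoke the dichotomy that every uncountable linear order contains a suborder isomorphic to $\omega_1$, to $\omega_1^*$, to an uncountable separable (real) type, or to an Aronszajn line, according to whether it has an uncountable well-ordered suborder, an uncountable reverse-well-ordered suborder, an uncountable separable suborder, or none of these. If $L$ is minimal with respect to being uncountable and $S\subseteq L$ is such a suborder, then minimality gives $L\hookrightarrow S$ while trivially $S\hookrightarrow L$. Since every uncountable suborder of $\omega_1$ has order type $\omega_1$, the cases $S\cong\omega_1$ and $S\cong\omega_1^*$ force $L\cong\omega_1$ and $L\cong\omega_1^*$; in the remaining two cases $L$ itself embeds into $\Rbb$ or into an Aronszajn line and is therefore a real type or an Aronszajn line. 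Thus it suffices to produce a model of $CH$ in which no real type and no Aronszajn line is minimal.

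For the real types I would argue directly from $CH$. Under $CH$ every real type has size $\aleph_1$, and the aim is to attach to an arbitrary real type $X$ an uncountable suborder $Y\subseteq X$ with $X\not\hookrightarrow Y$. The mechanism is rigidity: starting from a well-ordering of $\Rbb$ in type $\omega_1$ one obtains a Sierpi\'nski-style coloring that scrambles the real order against the well-order, and, together with an entangled set (which exists under $CH$), this prevents any uncountable suborder of $X$ from receiving a copy of all of $X$, so that deleting an uncountable piece of $X$ already destroys every self-embedding. The delicate point of this half is to make the rigidity uniform across \emph{all} real types rather than for a single fixed set, but it should be carried out in $\ZFC+CH$ and survive the later forcing provided the iteration adds no reals.

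The Aronszajn case is the heart of the matter and genuinely requires forcing, since Countryman lines — the source of minimal Aronszajn lines — exist in $\ZFC$ and need not be split by any ground-model suborder. I would run a countable-support iteration $\langle P_\alpha, \dot Q_\alpha : \alpha<\omega_2\rangle$ of proper forcings which, at each stage, takes an Aronszajn line $A$ supplied by a bookkeeping function and generically adjoins an uncountable $B\subseteq A$ together with a certificate that $A\not\hookrightarrow B$; concretely, a partition of $A$ into two uncountable suborders that are incomparable under embeddability, so that $A\hookrightarrow B$ would pull the complementary half into $B$ and contradict incomparability. The main obstacle is precisely the preservation of $CH$: a length-$\omega_2$ iteration must add no reals, even at limit stages, so I would need the iterands to be totally proper (or $\mathcal D$-complete) and would invoke a Shelah-style preservation theorem guaranteeing that the countable-support iteration of such forcings adds no reals and hence preserves $CH$. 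In tandem I must verify, by an elementary-submodel reflection argument, that the witnesses added early are not undone later and that no new minimal order is manufactured, and that the bookkeeping enumerates every Aronszajn line (and every real type) appearing in the final model; combined with the first two paragraphs this yields a model of $CH$ in which $\omega_1$ and $\omega_1^*$ are the only minimal uncountable linear orders.
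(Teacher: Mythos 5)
First, a remark on the comparison itself: the paper you were given does not prove this statement; Theorem \ref{omega_1} is quoted from Moore \cite{minimal_unctbl_types}, so your proposal can only be measured against Moore's argument. At the level of architecture you have reproduced it: reduce, via the quadrichotomy, to showing that no real type and no Aronszajn line is minimal; dispose of real types from $CH$ alone; dispose of Aronszajn lines by a countable support iteration of totally proper forcings of length $\omega_2$. On the real-type half, your worry about ``uniform rigidity'' is misplaced and entangled sets are not needed: under $CH$ any increasing injection of an uncountable $X\subseteq\Rbb$ into $\Rbb$ is determined by its restriction to a suitable countable subset of $X$ (outside countably many points the value is pinned between suprema and infima over a countable dense set), so there are only $\aleph_1$ many such maps $g_\alpha$; a straightforward diagonalization then yields an uncountable $Y\subseteq X$ omitting a point of each $g_\alpha[X]$, hence $X\not\hookrightarrow Y$. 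Since the iteration adds no reals, $CH$ holds in the final model and this argument is simply run there.

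The genuine gap is in the Aronszajn half, and it is not a detail --- it is essentially the entire content of Moore's paper. You never construct the iterand: asserting that one can ``generically adjoin an uncountable $B\subseteq A$ together with a certificate that $A\not\hookrightarrow B$'' by a totally proper ($\mathcal{D}$-complete) poset is asserting the main technical theorem, and the critical case, Countryman lines --- which exist in $\ZFC$ and are consistently minimal \cite{club_isomorphic} --- is exactly where designing such a poset and verifying its completeness is hard. Worse, your proposed certificate (a partition of $A$ into two embeddability-incomparable uncountable pieces) is not indestructible: non-embeddability between uncountable linear orders is emphatically not preserved by forcings of the completeness class you propose to iterate. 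Indeed the paper you are reading demonstrates this: its posets $\mathcal{F}_{XY}$ of Section \ref{embeddings} are $\Omega(T)$-complete --- they preserve $\omega_1$ and add no reals --- yet they generically add isomorphisms between uncountable suborders of $K$, and its posets $\mathcal{P}_L$ change which suborders are $\sigma$-scattered. So a later stage of your iteration could perfectly well add an embedding of one half of $A$ into the other, and your ``elementary-submodel reflection argument'' is a placeholder for the preservation theorem Moore actually proves: the witness of $A\not\hookrightarrow B$ must be chosen so that it provably survives the remainder of the iteration. Finally, you never address preservation of $\omega_2$ (via an $\aleph_2$-chain condition, e.g.\ Shelah's isomorphism condition as in Definition \ref{S-cic} and Lemma \ref{chain CS}), without which the bookkeeping claim that every Aronszajn line of the final model is handled at some stage $<\omega_2$ does not go through.
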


Later the methods in \cite{no_real_Aronszajn} and \cite{minimal_unctbl_types} were used to prove Laver's 
theorem is sharp, i.e, it is impossible to improve the theorem in $\ZFC$. 

\begin{thm} \cite{first} \label{sharp}
If there is a supercompact cardinal,
then there is a forcing extension which satisfies $\CH$ in which there are no minimal
non-$\sigma$-scattered linear orders.
\end{thm}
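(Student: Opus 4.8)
The plan is to obtain the model by a long forcing iteration over a supercompact cardinal $\kappa$, which will become $\omega_2$ in the extension, designed so that CH is preserved while the minimality of every candidate order is destroyed. First I would isolate the structural input that makes the problem tractable. A minimal non $\sigma$-scattered linear order must have size $\aleph_1$, and by the Hausdorff--Laver analysis underlying Theorem \ref{laver} every $\aleph_1$-sized non $\sigma$-scattered linear order contains a suborder which is either a real type or an Aronszajn type. Hence it suffices to arrange that, in the final model, neither a real type nor an Aronszajn type is ever minimal: each such $L$ should possess a proper uncountable suborder $K$ which is still non $\sigma$-scattered but into which $L$ does not embed, so that $K <_{\mathrm{emb}} L$ strictly and $L$ fails to be minimal. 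This is exactly the phenomenon already visible for Baumgartner types in the Introduction, where restricting a lexicographic order to a stationary $S_0$ with $S\smallsetminus S_0$ stationary yields a non-embeddable suborder via the pressing-down lemma; the goal is to make this behaviour generic.

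Next I would build the single-step forcings. For an Aronszajn (in particular Countryman) line $A$, I would force a \emph{club-embedding obstruction}: using a coherent Kurepa-tree structure along which $A$ is indexed, one forces a club subtree and restricts $A$ to it, so that the restriction remains non $\sigma$-scattered while $A$ admits no club embedding into it, robustly generalizing the stationary-splitting argument. This is the role of the keywords \emph{Kurepa trees} and \emph{club embedding}. For a real type $R$, the opposite of Baumgartner's isomorphism-of-reals scenario is wanted: force an $\aleph_1$-dense rearrangement splitting $R$ into two uncountable real suborders with $R$ non-embeddable into one of them, breaking the rigidity/homogeneity that would otherwise make real types minimal (contrast Theorem \ref{no_real_Aronszajn}). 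Each single step must be proper, must add no new reals (so CH is maintained), or be $<\kappa$-directed closed in its component so that the supercompactness of $\kappa$ is not disturbed.

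I would then iterate these steps over $\kappa$ with countable support and a bookkeeping function enumerating all nice names for $\aleph_1$-sized linear orders, after first making $\kappa$ indestructibly supercompact by a Laver preparation. Two global properties must be proved for the iteration: preservation of properness (so $\omega_1$ survives) and preservation of CH (no reals are added cofinally along the iteration), together with the fact that a suborder witnessing non-minimality of $L$, once added, continues to witness it at all later stages. Finally, given any non $\sigma$-scattered $L$ in the full extension, I would use the supercompact embedding $j\colon V\to M$ to reflect $L$ below $\kappa$, so that $L$ (or the relevant real/Aronszajn suborder it contains) was caught by the bookkeeping and its non-minimality forced at some stage and preserved thereafter. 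Since $\kappa=\omega_2$ and CH holds, this yields a model of CH with no minimal non $\sigma$-scattered linear order, in the spirit of the analogous Theorem \ref{omega_1}.

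The hard part will be twofold. First, the single-step design: I must verify that each step genuinely destroys minimality of the given type \emph{without} collapsing $\omega_1$, \emph{without} adding reals, and \emph{without} manufacturing some new minimal type as a side effect; the real-type step is the more delicate, since the splitting must simultaneously keep both pieces non $\sigma$-scattered and rule out any reembedding of the whole. Second, the global argument: proving that the iteration preserves both properness and CH through $\kappa$ stages, and that the reflection via $j$ really catches \emph{every} non $\sigma$-scattered order of the final model -- in particular that the property ``$K$ witnesses the non-minimality of $L$'' is upward absolute along the tail of the iteration. Reconciling CH-preservation (hence severe restrictions on what the iterands may add) with the demand that each iterand do enough work to shred its target is, I expect, the central obstacle.
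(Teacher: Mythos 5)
Your overall architecture (a CH-preserving iteration of length a supercompact $\kappa$, with bookkeeping and a final reflection argument) matches the general shape of the construction in \cite{first}, but the reduction you rest everything on is false, and it is false for reasons that are the very subject of this paper. You claim that ``by the Hausdorff--Laver analysis'' every $\aleph_1$-sized non $\sigma$-scattered linear order contains a real or an Aronszajn type, so that it suffices to make real and Aronszajn types non-minimal. No such dichotomy follows from Hausdorff's or Laver's work (which analyzes scattered and $\sigma$-scattered orders and says nothing about what a non $\sigma$-scattered order must contain), and it fails outright in ZFC: Baumgartner's orders $\{C_\alpha : \alpha \in S\}$ from \cite{new_class_otp} --- the very examples you invoke as motivation --- are non $\sigma$-scattered, yet every uncountable suborder of such an order contains a copy of $\omega_1$, so they contain no real type and no Aronszajn type. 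In the standard basis taxonomy, every uncountable linear order contains $\omega_1$, $\omega_1^*$, an uncountable real type, or an Aronszajn type; since $\omega_1$ and $\omega_1^*$ are scattered, the first two cases give no hold on non-$\sigma$-scatteredness, and the orders falling only under them are exactly the ones your iteration never treats. For such an $L$ your concluding reflection step is vacuous: $L$ has no real or Aronszajn suborder for the bookkeeping to have caught, no iterand ever targeted $L$, and nothing you force shows $L$ is non-minimal.

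Moreover, this gap cannot be repaired by finding a better dichotomy: Theorem \ref{main} of this paper shows that GCH is consistent with the existence of a \emph{minimal} non $\sigma$-scattered order containing no real or Aronszajn type (built from the branches of a Kurepa tree). So a model of CH in which all real and Aronszajn types are non-minimal --- which is all your iteration is designed to produce, and which is already achieved with no large cardinal at all in Moore's model for Theorem \ref{omega_1} --- may nevertheless contain a minimal non $\sigma$-scattered order. The introduction of this paper makes precisely this point: if minimal non $\sigma$-scattered orders had to be real or Aronszajn types, then Theorem \ref{sharp} would follow from Theorem \ref{omega_1} without any supercompact cardinal and without the work of \cite{first}. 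The actual proof of Theorem \ref{sharp} must destroy the minimality of \emph{arbitrary} non $\sigma$-scattered orders, not just real and Aronszajn ones; it does so using the Ishiu--Moore invariants $\Omega(L)$, $\Gamma(L)$, amenability, and the stationarity characterization of $\sigma$-scatteredness (Proposition \ref{char}), together with iterations of forcings that are not proper but $S$-complete in the sense recalled in Section \ref{back}, and it is exactly the class of orders your dichotomy assumes out of existence that makes this extra machinery and the large-cardinal hypothesis necessary.
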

Note that all of the results proving that Laver's theorem is consistently 
not sharp were based on the consistency of the 
minimality of real types or Aronszajn types. So it is natural to ask the following question.
\begin{question}
Does any minimal non-$\sigma$-scattered linear order have to be real or Aronszajn type?
\end{question} 
This question is also important from the point of view of the work involved in proving Theorems \ref{no_real_Aronszajn}, \ref{omega_1} and \ref{sharp}. An affirmative answer to this question would assert that
the assumption $\PFA^+$ would  not be needed in order to obtain the results in  \cite{no_real_Aronszajn}.  
Consequently, the model Moore came up with in order to prove
Theorem \ref{omega_1} would already satisfy ``Laver's thorem is sharp." Therefore the work in \cite{first} as well 
as the large cardinal assumption would not be  needed to prove Theorem \ref{sharp}.
In this paper we will provide a negative answer to this question. In particular 
real and Aronszajn types are not the only 
possible obstructions to the sharpness of Laver's theorem.  
\begin{thm} \label{main}
It is consistent with $\GCH$ that there is a non-$\sigma$-scattered linear order $L$ which contains no real or 
Aronszajn 
type and is minimal with respect to not being $\sigma$-scattered.
\end{thm}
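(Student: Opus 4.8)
The plan is to force a Kurepa tree $T$ with strong genericity properties while preserving GCH, and then to let $L$ be the set of cofinal branches of $T$ equipped with the lexicographic ordering $<_{\lex}$ induced by fixing, for each node, an ordering of its immediate successors. Since $T$ is Kurepa, $L$ has size at least $\aleph_2$, while every level of $T$ is countable and $\Ht(T)=\omega_1$. There are three things to verify: (1) $L$ is non-$\sigma$-scattered; (2) $L$ contains no real and no Aronszajn type; and (3) $L$ is minimal with respect to not being $\sigma$-scattered. The strategy is to isolate one combinatorial \emph{localization} property of the generic $T$ — that every family of at most $\aleph_1$ branches spans a $\sigma$-scattered suborder of $L$ — and derive (2) from it, while (3) comes from a homogeneity property realized as a \emph{club embedding}.

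First I would set up the forcing $\mathbb{P}$ whose conditions are countable approximations to $T$: a condition specifies the tree up to some countable level together with countably many of its branches, plus a witness to the $\sigma$-scatteredness of the branch order built so far (for instance a partial decomposition into scattered pieces, which is what ultimately drives the localization lemma). Ordering by end-extension makes $\mathbb{P}$ $\sigma$-closed, so it adds no reals and preserves $\omega_1$ and CH. Arranging $|\mathbb{P}|=\aleph_2$ together with an $\aleph_2$-chain condition (or, failing that, a direct count of nice names) preserves cardinals and yields $2^{\aleph_1}=\aleph_2$, so GCH survives; genericity supplies $\aleph_2$ cofinal branches, making $T$ Kurepa.

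For (1), suppose toward a contradiction that $L=\bigcup_{n<\omega}A_n$ with each $A_n$ scattered. As $\aleph_2$ is regular, some $A_n$ contains $\aleph_2$ branches; the downward closure of the nodes lying below $\aleph_2$-many branches of $A_n$ is a subtree in which every node has a splitting node above it, and threading branches of $A_n$ through countably many such splits produces an embedded copy of $\mathbb{Q}$, contradicting scatteredness of $A_n$. For (2), I would prove the \emph{localization lemma}: any family of at most $\aleph_1$ branches spans a $\sigma$-scattered suborder of $L$; this is precisely the commitment built into the conditions of $\mathbb{P}$ and is verified by a density argument using genericity. Since GCH gives CH, every real type has size at most $\aleph_1$ and every Aronszajn type has size exactly $\aleph_1$, and both are non-$\sigma$-scattered, so the localization lemma forbids them as suborders of $L$.

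The main work, and the step I expect to be the obstacle, is (3). Given a non-$\sigma$-scattered $L'\subseteq L$, the localization lemma forces $L'$ to contain $\aleph_2$ branches. I would then construct a \emph{club embedding} of $L$ into $L'$: fix a continuous $\in$-chain of countable elementary submodels, let $C\subseteq\omega_1$ be the club of their intersections with $\omega_1$, and build by recursion along $C$ a map from the nodes of $T$ into the subtree generated by $L'$ that is level-preserving on $C$ and preserves $<_{\lex}$. At each step one must locate, inside $L'$, branches extending the images already chosen in the prescribed lexicographic pattern; that such branches exist is exactly where the genericity and homogeneity of the generic Kurepa tree are used, reducing to a density statement in $\mathbb{P}$ reflected through the elementary submodels. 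The delicate point — and the heart of the argument — is keeping enough branches of $L'$ above every node along $C$ simultaneously so that the recursion never halts, which is why both the $\aleph_2$-branch conclusion and the precise genericity of $T$ are indispensable.
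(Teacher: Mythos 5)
Your \emph{localization lemma} is the fatal gap: it is not merely hard to force, it is outright false in $\ZFC+CH$, hence in every model of $GCH$. Under $CH$, \emph{every} Kurepa tree $T$ has a family of at most $\aleph_1$ branches that is non $\sigma$-scattered in the lexicographic order. Sketch: prune $T$ to the subtree $T'$ of nodes lying on $\aleph_2$ many branches; then every node of $T'$ lies on $\aleph_2$ many branches of $T'$ and every branch of $T'$ contains cofinally many $T'$-splitting nodes. Choose one branch $x_t$ of $T'$ through each node $t$ and let $X=\{x_t : t\in T'\}$, so $|X|\leq\aleph_1$. If two branches $b<_{\lex}b'$ of $T'$ induce the same cut on $X$, then (looking at the chosen branch through a sibling successor at any splitting node of $b$, resp.\ $b'$, above $b\wedge b'$) $b$ must eventually always take the $\lex$-maximal immediate successor and $b'$ the $\lex$-minimal one; there are at most $\aleph_1$ such exceptional branches, so $X$ has at least $\aleph_2$ distinct cuts. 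On the other hand, a scattered order $A$ has at most $|A|+\aleph_0$ cuts (induction on Hausdorff rank), and a cut of a countable union $\bigcup_n A_n$ is determined by its traces on the $A_n$; so under $CH$ a $\sigma$-scattered order of size $\aleph_1$ has at most $\aleph_1^{\aleph_0}=\aleph_1$ cuts. Hence $X$ is non $\sigma$-scattered. (This is essentially the Fact in Section 2 of the paper about $\aleph_1$-sized dense suborders, whose stated conclusion contains a typo: its proof shows such suborders are \emph{not} $\sigma$-scattered, and they are the intended witnesses; Proposition 3.8 even produces amenable such suborders.) Consequently your choice of witness $L=\mathcal{B}(T)$ can never be minimal: $X\subseteq L$ is non $\sigma$-scattered of size $\aleph_1$, and $L$, having size at least $\aleph_2$, cannot inject into it. No "commitment" written into countable conditions of $\mathbb{P}$ can evade this, and once the localization lemma is gone, your exclusion of real and Aronszajn types collapses with it.

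The paper's proof is organized precisely around this obstruction. First, the witness is not $\mathcal{B}(T)$ but an $\aleph_1$-sized \emph{dense} suborder of $K=(\mathcal{B}(T),<_{\lex})$, which is non $\sigma$-scattered exactly because its closure has size $\aleph_2$. Second, what is forced is the consistent weakening of your lemma: every suborder of $K$ whose \emph{closure} has size $\leq\aleph_1$ is $\sigma$-scattered. This is not achieved by the tree forcing $\mathcal{H}$ itself but by a countable support iteration of length $\omega_2$ of club-shooting posets $\mathcal{P}_L$ -- improper forcings which destroy the stationarity of $\Gamma(L)$ for each small-closure $L$, but which are $\Omega(K)$-complete, which is what the preservation theory (S-completeness plus the $S$-cic chain condition) requires -- interleaved with posets $\mathcal{F}_{XY}$ that use a forced coherent homogeneity system $\Pi$ on $T$ to make all $\aleph_1$-sized dense suborders of $K$ isomorphic. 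The weakened localization still kills real types (under $CH$ a real type has closure of size $\leq\aleph_1$ yet is non $\sigma$-scattered), while Aronszajn types are excluded by an entirely different mechanism: $T$ has no Aronszajn subtree, and this is preserved by $\Omega(T)$-complete forcings. Third, minimality is not obtained by a $\ZFC$ recursion along elementary submodels as you propose -- immediately after the tree is added, $K$ has amenable non $\sigma$-scattered suborders, which are provably non-minimal, so the required embeddings must themselves be forced, via the isomorphisms between dense suborders together with club-isomorphism of $T$ with its everywhere Kurepa subtrees. Your instinct that step (3) is where homogeneity and genericity must enter is correct, but in the actual proof they enter through iterated (improper) forcing, not through a construction inside the model.
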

Moreover, Theorem \ref{main} is related the following question which is due to Galvin.
\begin{question} \cite[problem 4]{new_class_otp}
Is there a linear order which is minimal with respect to not being $\sigma$-scattered and which has the property
that all of its uncountable suborders contain a copy of $\omega_1$?
\end{question}
Note that a consistent negative answer 
is already given by Theorem \ref{no_real_Aronszajn}. Theorem \ref{main} does not answer Galvin's question because
the linear order we introduce, has a lot of copies of $\omega_1^*$.

This paper is organized as follows. Section \ref{back} reviews some notations, definitions and facts regarding 
linear orders. Section \ref{tree} is devoted to constructing a specific Kurepa tree that is a suitable candidate 
for having suborders that witness Theorem \ref{main}. We also show that this tree contains a lot of 
non-$\sigma$-scattered linear orders which become $\sigma$-scattered in order to witness the main result. 
In Section \ref{embeddings} we introduce the posets that add isomorphisms we need.
Section \ref{last} finishes the proof of Theorem \ref{main}.

\section{Preliminaries} \label{back}

This section is devoted to some background, notation and definitions regarding trees, linearly ordered sets,
forcings and their iterations. More discussion can be found in \cite{no_real_Aronszajn}, 
\cite{second}, \cite{first},  and \cite{proper_forcing}.

To avoid ambiguity we fix some terminology and notations.
An \emph{$\omega_1$-tree} is a tree which is of height $\omega_1$, has countable levels and does not branch at limit 
heights, i.e. if $s,t$ are of the same limit height and have the same predecessors then they are equal.
A \emph{branch} of a tree $T$ is a chain in $T$ which intersects all levels.
An $\omega_1$-tree $T$ is called \emph{Aronszajn} if it has no branches. It is called \emph{Kurepa} if it has 
at least $\omega_2$ many branches.

For a tree $T$ and $t \in T$, $T(t)$ is the collection of all elements of $T$ that are comparable with $t$.
If $T$ is a tree and $A$ is  a set of ordinals, by $T\upharpoonright A$ we mean $\{t \in T: \textrm{ht}(t)\in A \}$, 
with the order inherited from $T$.
If $S,T$ are trees of height $\kappa$ and $C\subset \kappa$ is a club and 
$f:T\upharpoonright C \longrightarrow S\upharpoonright C$ is one to one, level and order preserving
then $f$ is called a \emph{club embedding} from $T$ to $S$.  
$\mathcal{B}(T)$ refers to the collection of all branches of $T$. 
If $L$ is a linearly ordered set, $\hat{L}$ denotes the completion of $L$. Formally $\hat{L}$ consists of 
all Dedekind cuts of $L$.

The following few definitions and facts give a characterization of $\sigma$-scatteredness which we use 
in the proof of Theorem \ref{main}. They also generalize the behavior of Baumgartner types that causes them
to be non-minimal. 
We will use this to show that the generic tree that we build in section \ref{tree}
has suborders that are obstructions to minimality.
\begin{defn} \cite{no_real_Aronszajn}
Assume $L$ is a linear order and $Z$ is a countable set. We say $Z$ \emph{captures} $x \in L$ if there is a 
$z \in Z\cap \hat{L}$ such that there is no element of $Z\cap L$ strictly in between $x$ and $z$.
\end{defn}
\begin{fact} \cite{no_real_Aronszajn}
Suppose $L$ is a linear order and $\kappa$ is a regular large enough cardinal. If $M$ is a countable 
elementary submodel of $H_\kappa$ such that $L\in M$ and $x\in L\setminus M$, then 
$M$ captures $x \in L$ iff it there is a unique $z \in \hat{L}\cap M$ such that there is no element of 
$M \cap L$ strictly in between $x$ and $z$. In this case we say $M$ captures $x$ via $z$.
\end{fact}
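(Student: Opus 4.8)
The plan is to separate the two implications, since essentially all of the content lives in the forward (uniqueness) direction. For the ``if'' direction I would simply observe that $L\in M\prec H_\kappa$ forces $\hat L\in M$, so a $z$ witnessing the displayed property is in particular a member of $\hat L\cap M=M\cap\hat L$ admitting no point of $M\cap L$ strictly between $x$ and $z$; this is verbatim the definition of ``$M$ captures $x$'', so the existence of a \emph{unique} such $z$ trivially yields capturing. The real work is therefore to prove that if $M$ captures $x$ then the witness is unique.

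So I would suppose toward a contradiction that $z_1\ne z_2$ are both witnesses in $\hat L\cap M$, relabelled so that $z_1<z_2$ in $\hat L$. First I would record two trivial consequences of $x\in L\setminus M$ together with $z_1,z_2\in M$: since the cut associated to $x$ is definable from $x$ with parameters $L,\hat L\in M$, we get $x\ne z_1$ and $x\ne z_2$ in $\hat L$; and any point subsequently located inside $M\cap L$ is automatically distinct from $x$.

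The key step is to manufacture a point of $M\cap L$ inside $[z_1,z_2]_{\hat L}$, splitting into two cases according to whether $z_1,z_2$ form a jump. If some $p\in L$ satisfies $z_1<p<z_2$, then by elementarity (the parameters $z_1,z_2,L$ all lie in $M$) such a $p$ can be found in $M\cap L$; as $p\ne x$, it lies strictly on one side of $x$, and on that side $p$ is strictly between $x$ and one of $z_1,z_2$, contradicting that this $z_i$ is a witness. If instead no element of $L$ lies strictly between $z_1$ and $z_2$, I would argue that a genuine gap of $\hat L$ (a filled Dedekind cut not arising from $L$) is a two-sided limit of points of $L$, so the absence of an intermediate $L$-point forces both $z_1$ and $z_2$ into $L$, hence into $M\cap L$; then $x$ cannot lie strictly between them, so $x\ne z_1,z_2$ puts $x$ entirely below $z_1$ or entirely above $z_2$, and the nearer of $z_1,z_2$ is a point of $M\cap L$ strictly between $x$ and the farther one, again contradicting that the farther one is a witness.

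I expect the only delicate point to be this jump case, i.e.\ ensuring uniqueness is not defeated by two adjacent cuts; the resolution is precisely the observation that adjacency forces both cuts to belong to $L$, after which the nearer one blocks the farther. Everything else is a routine reflection-into-$M$ argument combining elementarity with the density of $L$ in its Dedekind completion.
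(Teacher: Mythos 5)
The paper gives no proof of this Fact at all --- it is quoted from Ishiu--Moore \cite{no_real_Aronszajn} as background --- so there is no in-paper argument to compare against; your proposal must be judged on its own, and it is correct. The backward direction is indeed immediate from the definition of capturing, and your uniqueness argument is sound: in the ``dense'' case, elementarity (with parameters $z_1,z_2,L,\hat{L}\in M$) reflects an intermediate point of $L$ into $M\cap L$, and since that point differs from $x$ it lies strictly between $x$ and one of the two witnesses, a contradiction; in the ``jump'' case, the absence of an $L$-point between $z_1<z_2$ forces both to be (identified with) elements of $L$, hence of $M\cap L$, and then $x\neq z_1,z_2$ places $x$ outside $[z_1,z_2]$, so the nearer witness blocks the farther one. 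One point deserves emphasis, and you correctly flag it as the delicate step: the jump case depends on the convention that the Dedekind completion adds new points only at genuine gaps (cuts with no last element below and no first element above), not at jumps. This is not merely a convenience --- under a convention where a jump $a<b$ of $L$ also produces a completion point $c$ with $a<c<b$, the Fact itself becomes false (take $a\in M$ and $x=b\notin M$; then both $a$ and $c$ would be witnesses), so the truth of the statement requires exactly the reading your argument uses. Your auxiliary observation that $x\in L\setminus M$ cannot coincide in $\hat{L}$ with any $z\in M$ (since $x$ would then be definable in $M$ from $z$, $L$, $\hat{L}$) is also the right way to dispose of the degenerate cases.
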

\begin{defn} \cite{no_real_Aronszajn}
Assume $L$ is a linear order. \emph{$\Omega(L)$} 
is the set of all countable $Z\subset \hat{L}$ which capture all 
elements of $L$. $\Gamma(L)=[\hat{L}]^\omega \setminus \Omega(L)$.
\end{defn}
\begin{prop} \cite{no_real_Aronszajn} \label{char}
A linear order $L$ is $\sigma$-scattered iff $\Gamma(L)$ is not stationary in $[\hat{L}]^\omega$.
\end{prop}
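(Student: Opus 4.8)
The plan is to prove both implications by translating capturing into a statement about two-sided limits of countable sets. Since club subsets of $[\hat L]^\omega$ may be refined to sets of the form $\{M\cap \hat L : M\in \mathcal M\}$ for $\mathcal M$ a club of countable elementary submodels $M\prec H_\kappa$ with $L\in M$, the preceding Fact lets me work with the following criterion: for such an $M$ and $x\in L\setminus M$, $M$ \emph{fails} to capture $x$ exactly when $x$ is a two-sided limit of $M\cap L$ (that is, $\{a\in M\cap L: a<x\}$ has no maximum and $\{b\in M\cap L: b>x\}$ has no minimum) and no $z\in M\cap\hat L$ lies in the gap these sets determine. If one of the two sets has an extremum, that extremum is a witness $z\in M\cap L\subseteq M\cap\hat L$, and any $z$ sitting in the gap is also a witness; conversely such a $z$ must be the unique witness. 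Thus $M\cap\hat L\in\Gamma(L)$ iff some point of $L$ is an \emph{unnamed} two-sided limit of $M\cap L$, and I will show that scatteredness is precisely the absence, for club-many $M$, of such unnamed limits.

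For the forward implication, suppose $L$ is $\sigma$-scattered and let $\mathcal K$ be the class of linear orders $K$ for which $\Omega(K)$ contains a club. I will check that $\mathcal K$ contains the singletons and is closed under well-ordered sums, reverse-well-ordered sums, and countable unions; since these operations generate the $\sigma$-scattered orders, this yields $L\in\mathcal K$. Singletons are immediate. For a well-ordered sum $K=\sum_{i\in I}K_i$ with each $K_i\in\mathcal K$, take $M$ containing the indexed family together with witnessing clubs, so that $M\cap\hat{K_i}\in\Omega(K_i)$ for every $i$; given $x\in K_i$, a witness $z\in M\cap\hat{K_i}$ for $x$ inside $K_i$ remains a witness inside $K$, because the block $\hat{K_i}$ is \emph{convex} in $\hat K$ and no element of $M\cap K$ can fall strictly between $x$ and $z$ from outside $K_i$; the case where the index $i$ is itself a two-sided limit of $M\cap I$ is handled by the well-ordering of $I$, which always names the relevant cut. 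Reverse sums are symmetric. The countable-union step is the delicate one: the pieces need not be convex, and one must rule out that a point becomes an unnamed two-sided limit of $M\cap K$ while being captured in each piece. This is where I would spend the most care, analysing, for an alleged unnamed limit $x\in K_n$, the countably many suprema $\sup(M\cap K_m\cap(-\infty,x))$ in $\hat K$ and showing that the overall left gap of $x$ is either attained by one piece, hence named in $M\cap\hat K$, or already witnessed, contradicting the assumption.

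For the reverse implication I argue directly: assuming $\Omega(L)$ contains a club, I build a countable scattered decomposition of $L$. Fixing a club $\mathcal M$ of models whose traces capture all of $L$, I color each $x\in L$ by finitely much data — the witness $z$ relative to the first model that captures $x$, together with a pressing-down stabilization of the stage at which the witness appears — so that $L$ is partitioned into countably many color classes. The point is that each class $C$ is scattered: if $C$ contained a copy of $\Qbb$, then choosing $M\in\mathcal M$ containing that copy, density would produce a point $x\in C$ that is a two-sided limit of $M\cap C$, and the common color of the copy would force this limit to be \emph{unnamed} in $M\cap\hat L$, contradicting that $M$ captures $x$. Hence each class omits copies of $\Qbb$ and $L$ is $\sigma$-scattered. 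I expect this reverse direction — specifically, designing the coloring so that membership in a single class genuinely forbids named gaps, thereby matching capturing with scatteredness — to be the main obstacle; a secondary technical point, when $|\hat L|>\aleph_1$, is to run the stabilization inside a suitable continuous matrix of models so that the pressing-down argument still applies to the stationary ideal on $[\hat L]^\omega$.
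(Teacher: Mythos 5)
Two preliminary remarks: the paper never proves this proposition --- it is quoted from \cite{no_real_Aronszajn} --- so your attempt can only be measured against the statement itself; and the parts of your argument that are correct (singletons, well-ordered and reverse well-ordered sums, and the Hausdorff-style generation of the $\sigma$-scattered orders once countable unions are allowed) are exactly the easy parts. Both places you flag as ``where I would spend the most care'' contain genuine gaps, and in both cases the sketch you give would not close them.

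Forward direction: the countable-union step fails as described, and it is not a technicality --- it is the heart of the theorem. Your sum steps work because blocks are \emph{convex}: a witness for $x$ inside a block is automatically a witness inside the sum, and for an index $i\notin M$ the well-ordering hands you the named cut $\inf K_{j_1}$ with $j_1=\min\{j\in M\cap I: j>i\}$. For a non-convex union $K=\bigcup_n K_n$ capturing is not local to a piece: if $x\in K_n$ and $M$ captures $x$ inside $K_n$ via a cut $z$ below $x$, points of $M\cap K_m$ for $m\neq n$ may lie strictly between $z$ and $x$, so $z$ captures nothing in $K$; some pieces may only admit witnesses above $x$ while others only admit witnesses below $x$; and pieces with $x\notin K_m$ are subject to no constraint whatsoever, yet may contribute points of $M\cap K$ arbitrarily close to $x$ on both sides. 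Your proposed remedy rests on a false implication: if $\sup\bigl(M\cap K_m\cap(-\infty,x)\bigr)$ equals the overall supremum $\sup A$ of $\{a\in M\cap K: a<x\}$, this cut is computed from $M$ and $x$ and there is no reason for it to be an element of $M$, hence no reason for it to be ``named in $M\cap\hat L$.'' (One does get a named witness when the attaining piece \emph{contains} $x$ and its capture of $x$ happens on the \emph{same} side as the attainment; but nothing forces these to match, and this mismatch is precisely the obstruction.) Indeed, once the proposition is known, an elementarity argument shows that \emph{every} countable $M\prec H_\theta$ containing a $\sigma$-scattered $K$ captures all of its points, so the union step is true for the natural club; but a direct inductive proof of it requires an idea that is absent here.

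Reverse direction: the partition of $L$ into ``countably many color classes'' is never constructed, and the data you propose cannot produce one. The witness $z$ ranges over $\hat L$, an uncountable set, so coloring by the witness yields $|\hat L|$-many classes, not countably many. ``The first model that captures $x$'' is not well defined: a club in $[\hat L]^\omega$ carries no well-ordering, and under your hypothesis every model of the club captures $x$, so there is nothing to minimize; what you presumably intend is a continuous $\in$-chain $\langle M_i : i<\omega_1\rangle$ and the sequence of witnesses $z_i(x)$ along it, but such a chain covers $\hat L$ only when $|\hat L|\le\aleph_1$, the function $i\mapsto$ (stage at which $z_i(x)$ appears) is regressive only per fixed $x$, and Fodor applied to each $x$ separately does not assemble into a single countable partition valid for all $x$ simultaneously. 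Finally, even granting a partition, the key claim --- that a copy of $\Qbb$ inside one class forces some point of it to be an \emph{unnamed} two-sided limit of $M\cap L$, contradicting capture --- is asserted, not proved; it is exactly the point where ``same color'' must be converted into ``no witness,'' and no mechanism for that conversion is given. As it stands, the reverse direction is a plan rather than a proof.
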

\begin{defn} \cite{no_real_Aronszajn}
Assume $L$ is a linear order, $x \in L,$ and $M$ is a countable elementary submodel of $H_\theta$ where 
$\theta> 2^{|L|}$ is a large enough regular cardinal. We say $x$ is \emph{internal} to $M$ if there is a club 
$E \subset [L]^\omega$ in $ M$ such that whenever $Z\in M\cap E$, $Z$ captures $x\in L$. 
We say $L$ is \emph{amenable} if for all 
large enough regular cardinals $\theta$, for all countable 
elementary submodels $M$ of $H_\theta$ that contain
$L$ as an element, and for all $x \in L$, $x$ is internal to $M$. 
\end{defn}
The following proposition shows that amenability is what causes 
Baumgartner types and consistently more
linear orders to be non-minimal, see \cite{first}, discussion after the proof of Theorem 3.1.
\begin{prop} \cite{no_real_Aronszajn}
If $L$ is an amenable non-$\sigma$-scattered linear order of size $\aleph_1$, then it is not minimal with respect to 
being non-$\sigma$-scattered. 
\end{prop}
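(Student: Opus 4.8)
The plan is to exhibit a suborder $L_0\subsetneq L$ which is still non-$\sigma$-scattered but into which $L$ does not embed; this witnesses that $L$ is not minimal with respect to being non-$\sigma$-scattered. Since $L$ is non-$\sigma$-scattered, Proposition \ref{char} tells us that $\Gamma(L)$ is stationary in $[\hat L]^\omega$. The guiding analogy is Baumgartner's example $\{C_\alpha:\alpha\in S\}$: there each point is so rigidly placed that any self-embedding fixes all but non-stationarily many of the $C_\alpha$, and deleting a stationary set of them destroys embeddability while preserving non-$\sigma$-scatteredness. Amenability is exactly the abstraction of this rigidity, with the internal points playing the role of the $C_\alpha$, and I would run the same pressing-down strategy in this generality.

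The heart of the argument is a rigidity lemma for self-embeddings. Fix a large regular $\theta$ and a countable $M\prec H_\theta$ with $L,f\in M$, where $f\colon L\to L$ is an embedding, and let $x\in L\setminus M$. By amenability $x$ is internal to $M$, witnessed by a club $E\in M$; running through an increasing sequence of $Z\in M\cap E$ whose union is $M\cap\hat L$, the capturing witnesses converge to the cut that $x$ determines over $M\cap\hat L$. Since $f\in M$, elementarity lets $M$ compute $f$ on $M\cap L$, and since $f$ is order preserving, the $M$-cut of $f(x)$ is the $f$-image of the $M$-cut of $x$; comparing this with the fact that $f(x)$ is itself internal to $M$ forces $f(x)$ to lie in the same gap of $M\cap L$ as $x$, for club-many $M$ and every such $x$. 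A pressing-down argument on the resulting displacement function then yields that $f$ is the identity on all but a non-stationary set of internal points, exactly as in Baumgartner's example. I expect this to be the \emph{main obstacle}: making precise the sense in which an internal point is pinned to its $M$-gap, ruling out that distinct points share a gap in a way that breaks the pressing-down, and verifying that the error set is genuinely non-stationary, is where all the work with the capturing relation and internality is concentrated.

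With the rigidity lemma in hand, I would extract from $\Gamma(L)$ a stationary set $S\subseteq L$ of internal points and split it, using Solovay's theorem on splitting stationary subsets of $[\hat L]^\omega$, into two stationary pieces $S_0\sqcup S_1$. Put $L_0=L\setminus S_1$. First, $L_0$ is non-$\sigma$-scattered: the points of $S_0$ survive as witnesses keeping $\Gamma(L_0)$ stationary in $[\hat{L_0}]^\omega$, which uses that the relevant cuts persist in $\hat{L_0}$ when only the points of $S_1$ are removed. Second, $L$ does not embed into $L_0$: any such embedding is a self-embedding $g\colon L\to L$ with $\range(g)\subseteq L_0$, so by the rigidity lemma $g(x)=x$ for all but non-stationarily many $x$; since $S_1$ is stationary there is $x\in S_1$ with $g(x)=x$, whence $x=g(x)\in\range(g)\subseteq L_0=L\setminus S_1$, contradicting $x\in S_1$. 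Thus $L_0$ is a non-$\sigma$-scattered proper suborder that $L$ does not embed into, and $L$ is not minimal. Besides the rigidity lemma, the two points requiring care are the stationary splitting and the inheritance of non-$\sigma$-scatteredness by $L_0$, but both are comparatively routine once the rigidity of embeddings is established.
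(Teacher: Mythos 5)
You should first note a point of calibration: the paper you were handed does not prove this proposition at all — it quotes it from \cite{no_real_Aronszajn} (see also the discussion after Theorem 3.1 of \cite{first}) — so your attempt has to be judged against the actual Ishiu--Moore argument, and against correctness. Judged that way, there is a genuine gap, and it sits exactly at what you call the heart of the argument. The ``rigidity lemma'' (every self-embedding of an amenable order is the identity off a non-stationary set) is false. Let $L=\{C_\alpha:\alpha\in S\}$ be Baumgartner's order and consider $L\oplus L$: it is non-$\sigma$-scattered, and it is still amenable (capturing a point of one summand by a set $Z$ reduces to capturing it by the trace of $Z$ on that summand, and a model containing $L\oplus L$ contains a convex decomposition by elementarity, so the internality clubs transfer). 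Yet the automorphism swapping the two copies moves \emph{every} point; likewise the shift on $L\oplus L\oplus\cdots$ is a fixed-point-free self-embedding. So amenability is not an abstraction of pointwise rigidity. The precise step that breaks is your sentence ``the $M$-cut of $f(x)$ is the $f$-image of the $M$-cut of $x$'': since $f[M\cap L]$ is in general a proper subset of $M\cap L$, the $f$-image of the cut of $x$ is only a cut over $f[M\cap L]$ and does not determine the cut of $f(x)$ over all of $M\cap L$. In the swap example, $x=C^{(1)}_\delta$ and $f(x)=C^{(2)}_\delta$ (where $\delta=M\cap\omega_1$) are both internal to $M$, both uncaptured by $M$, and sit in \emph{different} gaps of $M\cap L$; internality of $f(x)$ produces no contradiction, and no pressing-down salvages $f(x)=x$.

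What your cut computation does correctly deliver (one elementarity step away) is the pull-back lemma: if $f,L\in M$ and $M$ captures $f(x)$ via $z$, then $M$ captures $x$ via the cut $\{y\in L: f(y)<z\}$, which lies in $M$. Hence embeddings belonging to $M$ map points \emph{uncaptured} by $M$ to points uncaptured by $M$ — they need not fix them. With only this, your construction collapses: deleting one designated point $x_Z$ per model in a stationary piece does not block embeddings when the set $U_M$ of $M$-uncaptured points has more than one element — e.g.\ the shift embeds $L\oplus L\oplus\cdots$ into the complement of any set of points chosen only from the first copy. A correct proof must delete a set $D$ that, for stationarily many $M$, contains \emph{all} of $U_M$ (then an embedding of $L$ into $L\setminus D$ contradicts the pull-back lemma at such an $M$), while for stationarily many other $M$ some point of $U_M$ survives \emph{and remains uncaptured in $L\setminus D$} — which is not automatic, since capturing only becomes easier after deleting points. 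You also never address the fiber problem (distinct $Z\in\Gamma(L)$ can share the same uncaptured point, e.g.\ all models with the same $Z\cap\omega_1$ in Baumgartner's example, so a stationary splitting must be chosen to respect these fibers), nor what ``a stationary set of points of $L$'' means for an abstract linear order. These issues — handled via amenability and pressing down on the internality clubs, not via rigidity — are precisely where the work in \cite{no_real_Aronszajn} lies.
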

During this paper we consider the invariants $\Omega,$ and $\Gamma$ for trees and linear orders 
with different definitions. The point is that all these definitions coincide modulo an equivalence relation
that is defined here.
\begin{defn} \label{equiv}
Assume $X,Y$ are two uncountable sets and $A,B$ are two collections of countable subsets of $X,Y$ 
such that $\bigcup A = X$ and $\bigcup B =Y$.
We say $A,B$ are equivalent if for all large enough regular cardinal $\theta$ there is a club $E$ of 
countable elementary submodels $M$ of $H_\theta$ such that $M\cap X \in A$ if and only if $M\cap Y \in B$.
\end{defn}
The invariant $\Omega$ together with the equivalence relation mentioned above was used in 
\cite{no_real_Aronszajn}. By the work in \cite{no_real_Aronszajn}, if $L_0\subset L$ and $L$ embedds into 
$L_0$ then $\Omega(L)$ is equivalent to $\Omega(L_0)$. In fact 
the strategy in that work  was to find a suborder $L_0$ of a given non-$\sigma$-scattered 
linear order $L$ such that $\Omega(L_0)$ is stationary and not equivalent to $\Omega(L)$. 
This seems to be the motivation of Problem 5.10 in \cite{no_real_Aronszajn}. The problem asks, assuming that 
$S$ is stationary, is the class of all linear orders $L$ with $\Omega(L) \equiv S$ well quasi ordered?
We will show that even with such a restriction on the $\Omega$ of non-$\sigma$-scattered linear orders 
it is impossible to obtain a well quasi ordered class. Here for linear orders $A$ and $ B$, the linear order consisting
of the disjoint union of $A,B$ in which every element of $A$ is less than every element of $B$ is denoted
by $A \oplus B$.
\begin{prop}
Assume $S\subset \omega_1$ is a stationary set consisting of limit ordinals, and $\{S_i : i \in \omega \}$ is a 
partition of $S$ into infinitely many stationary pieces. Let 
$\langle C_\alpha \subset \alpha: \alpha \in S \rangle$ be a 
collection of cofinal sequences of order type $\omega$.
Let $L=\{C_\alpha : \alpha \in S \}$ and
$L_i=\{C_\alpha : \alpha \in \bigcup_{j\geq i}S_j \}$ ordered with the lex order.
Then the sequence $\langle L \oplus L_i : i \in \omega \rangle$ is a descending chain of linear 
orders and $\Omega(L \oplus L_i) \equiv \Omega(L \oplus L_j)$ for all $i,j$ in $\omega$.
\end{prop}
\begin{proof}
We start with an observation.  Assume $L_X=\{x_\alpha: \alpha \in X\}$ and $L_Y= \{ y_\alpha : \alpha \in Y \}$ are two 
arbitrary Baumgartner types where $X,Y$ are stationary subsets of   $\omega_1$ consisting of limit ordinals and 
$x_\alpha, y_\alpha$
are  increasing cofinal  $\omega$-sequences in $\alpha$.
Suppose $f : L_X \longrightarrow L_Y$ is an embedding. Then 
the set of $\alpha$ with $f(x_\alpha) \neq y_\alpha$ is nonstationary.
In order to see this, note that the set of all $\alpha$ with $f(x_\alpha) = y_\xi$, for some $\xi \in \alpha$, is 
nonstationary by pressing down lemma. Similarly, the set of all $\beta $ with $y_\beta =f( x_\alpha)$, for some 
$\alpha \in \beta$ is nonstationary. Hence $\{x_\alpha : f(x_\alpha) = y_\beta (\exists \beta > \alpha) \}$
is $\sigma$-scattered. Therefore the set of  all $\alpha$ with $f(x_\alpha)= y_\beta$ for some $\beta > \alpha$
is nonstationary as desired.

Now we will show that the sequence $\langle L \oplus L_i : i \in \omega \rangle$ is a descending chain. Assume 
for some $m \in \omega$, $L \oplus L_m$ embeds into $L \oplus L_{m+1}$. Let 
$A=\{a_\alpha : \alpha \in \bigcup_{i \geq m} S_i \}$ and $B=\{b_\alpha : \alpha \in \bigcup_{i\geq m+1} S_i \}$
be disjoint from $L$ and be isomorphic to $L_m$ and $L_{m+1}$ respectively, via the maps $a_\alpha \mapsto C_\alpha$
and $b_\alpha \mapsto C_\alpha$. 
Also let $f:L\oplus  A \longrightarrow L \oplus B$ be an embedding.
Then the sets 
$\{ \alpha : f(C_\alpha) \neq C_\alpha $ and $f(C_\alpha) \neq b_\alpha \}$ and 
$\{\alpha : f(a_\alpha) \neq C_\alpha$ and $f(a_\alpha) \neq b_\alpha \}$ are nonstationary.
Therefore, there is a nonstationary $N \subset \omega_1$ such that for all 
$\alpha \in S_m \setminus N$, 
$f(a_\alpha) =C_\alpha$ and for all $\alpha \in S_m \setminus N$, $f(C_\alpha)=C_\alpha$. This contradicts 
the injectivity of $f$.

If $M$ is a countable elementary submodel of $H_\theta$ $(\theta > 2^{\omega_1})$, $M$ captures
all elements of $L \oplus L_i$ if and only if $M\cap \omega_1 \notin S$. This shows that 
$\Omega(L \oplus L_i) \equiv \Omega(L \oplus L_j)$.
\end{proof}

Assume  $T$ is an $\omega_1$-tree that is equipped with a lexicographic order
such that for all $t \in T$, the set $\{s \in T: \textrm{\textrm{ht}(s)}= \textrm{ht}(t)+1$ and $t<_T s \}$ 
is isomorphic to $\mathbb{Q}$, when it is considered with the lex order of the tree $T$.
Let $L=(T,\lex)$, then $\Omega(L)$ defined here is equivalent to the $\Omega(T)$ defined in 
\cite{second}.
\begin{defn} \cite{second}
Assume $T$ is an $\omega_1$-tree.
$\Omega(T)$ is the set of all countable $Z\subset \mathcal{B}(T)$ with the property that 
for all $t \in T_{\alpha_Z}$ there is a $b \in Z$ with $t \in b$, where
$\alpha_Z = \sup\{b\Delta b':b,b'\in Z\}$. 
\end{defn}

Now we have two notions of capturing:  for linear orders and $\omega_1$-trees. The following
trivial fact asserts that 
in the cases that we are interested in, these two notions coincide. 
\begin{fact}
Assume $T$ is an $\omega_1$-tree equipped with a lex order such that the set of all immediate successors of 
each element of $T$ is isomorphic to $\mathbb{Q}$.
Suppose $\mathcal{B}(T)$ is the collection of all cofinal branches  in $T$, 
$M$ is a countable elementary submodel of $H_\theta$ for some large enough regular $\theta$, and $t\in T$. 
Then  $M$ captures $t$ if and only if 
either $t \in M$ or there is a branch $b \in M\cap \mathcal{B}(T)$ such that $t\Delta b \geq M\cap \omega_1$.
Here $t\Delta b$ is the height of the least element of $b$ that is not a predecessor of $t$.
\end{fact}

The following fact is also easy to check.
\begin{fact}
Assume $L'\subset L$ are linear orders, $x \in L'$ and $M$ is a countable elementary submodel of $H_\theta$
that has $L,L'$ as elements, where $\theta>2^{|\hat{L}|}$ is regular. Then $M$ captures $x$ as an element 
of $L$ iff $M$ captures $x$ as an element of $L'$.
\end{fact}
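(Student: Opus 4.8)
The plan is to reduce to the nontrivial case and then treat the two directions separately, the key tool being the canonical restriction map between the two completions. First, if $x\in M$ then $x\in M\cap\hat L$ and $x\in M\cap\hat{L'}$, and taking $z=x$ in each definition witnesses both captures vacuously; so we may assume $x\in L'\smallsetminus M$, and in particular no element of $M\cap L$ equals $x$. Let $\rho\colon\hat L\to\hat{L'}$ be the map sending a Dedekind cut $c$ of $L$ to the cut $c\cap L'$ of $L'$. Since $L,L'\in M$, the map $\rho$ is definable over $M$, hence $\rho\in M$ and $\rho$ sends $M$-cuts to $M$-cuts; moreover $\rho$ is (weakly) order preserving, it fixes points of $L'$ in the sense that $\rho(y)=y$ for $y\in L'$, and it satisfies the contrapositive principle that $\rho(a)<\rho(b)$ implies $a<b$.

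For the direction ``captured in $L$ implies captured in $L'$'', suppose $M$ captures $x\in L$ via $z\in M\cap\hat L$; by symmetry assume $z<x$. Put $w=\rho(z)\in M\cap\hat{L'}$, so $w\le\rho(x)=x$ in $\hat{L'}$. If $w=x$ we are done, so assume $w<x$ and suppose toward a contradiction that some $y\in M\cap L'$ satisfies $w<y<x$ in $\hat{L'}$. Then $\rho(z)=w<y=\rho(y)$ gives $z<y$ in $\hat L$, while $y<x$ in $\hat{L'}$ gives $y<x$ in $\hat L$ because $L'\hookrightarrow L$ is an order embedding. Thus $y\in M\cap L$ lies strictly between $z$ and $x$ in $\hat L$, contradicting that $z$ witnesses capture of $x\in L$. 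The case $z>x$ is symmetric.

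For the converse, suppose $M$ captures $x\in L'$ via $w\in M\cap\hat{L'}$, say $w<x$. Let $B=\{b\in L': b>w \text{ in } \hat{L'}\}$ and set $z=\inf_{\hat L}B$; since $x\in B$ the infimum is over a nonempty set, and since $w,L,L'\in M$ we have $z\in M\cap\hat L$, with $z\le x$. If $z=x$ then $z$ witnesses capture of $x\in L$ and we are done, so assume $z<x$ and suppose some $y\in M\cap L$ satisfies $z<y<x$ in $\hat L$. Because $z=\inf B$ and $y>z$, the point $y$ is not a lower bound of $B$, so there is $b\in L'$ with $w<b<y$. Now the statement ``there exists $b\in L'$ with $w<b<y$'' has parameters $w,y,L'$ all in $M$, so by elementarity such a $b$ may be found in $M$; then $b\in M\cap L'$ and $w<b<y<x$ exhibit an $M\cap L'$ point strictly between $w$ and $x$, contradicting that $w$ witnesses capture of $x\in L'$. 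Again $w>x$ is symmetric.

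The only real subtlety, and hence the step I expect to be the crux, is this last direction: a candidate intermediate point $y$ produced from a cut of $L$ need not itself lie in $L'$, so one cannot directly contradict capture in $L'$. The point is that the \emph{existence} of an $L'$-point between $w$ and $y$ is a first-order fact with parameters in $M$, so elementarity returns such a witness inside $M\cap L'$; this is exactly the ``no obstruction'' phenomenon that the fact is meant to record, in contrast to the general non-elementary situation flagged in the preceding remark.
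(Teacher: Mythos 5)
Your proof is correct. The paper never actually writes out a proof of this fact---it explicitly leaves it to the reader as a ``routine elementarity argument''---and your argument (pushing the witness $z$ forward via the restriction map $\hat{L}\to\hat{L'}$ in one direction, and in the other direction taking $z=\inf_{\hat L}\{b\in L': b>w\}$ and using elementarity of $M$ to replace an arbitrary $L'$-point below $y$ by one in $M$) is exactly such an argument, so it matches the approach the paper intends.
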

We will need the following fact in the final section. We include the proof for more clarity.
\begin{fact}
Assume $L$ is a linear order which has size $\aleph_2$, all elements of $L$ have cofinality and coinitiality
$\omega_1$, and $L'\subset L$ is dense and has cardinality $\aleph_1$. Then $L'$ is not $\sigma$-scattered.
\end{fact}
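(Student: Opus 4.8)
The plan is to verify $\sigma$-scatteredness through the characterization in Proposition \ref{char}: it suffices to show that $\Gamma(L')$ is non-stationary in $[\hat{L'}]^\omega$, equivalently that a club of countable $Z\subseteq\hat{L'}$ capture every point of $L'$. Since $L'$ is dense in $L$ the two orders have the same Dedekind completion, so $\hat{L'}=\hat{L}$ and in particular $L\subseteq\hat{L'}$; thus the points of $L$ themselves are available as capturing cuts. I would take the candidate club to consist of sets $Z=M\cap\hat{L'}$ for countable elementary submodels $M\prec H_\theta$ with $L,L'\in M$, and analyse capturing point by point.

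Fix such an $M$ and a point $x\in L'\setminus M$, and set $A=M\cap L'_{<x}$ and $B=M\cap L'_{>x}$. A short computation shows that the cuts $z$ witnessing that $M$ captures $x$ are exactly those lying in the interval $[\sup A,\inf B]$ of $\hat{L'}$, so the task reduces to placing a member of $M\cap\hat{L'}$ in this interval. Here the hypothesis on $L$ enters: since every element of $L$ has cofinality and coinitiality $\omega_1$ while $A,B$ are countable, $A$ is not cofinal below $x$ and $B$ is not coinitial above $x$, so $\sup A<x<\inf B$ and the interval is non-degenerate. Moreover $\sup A$ and $\inf B$ cannot be realized by points of $L$ (a countable set cannot be cofinal below, or coinitial above, an $L$-point), so they are genuine gaps of $L$ of countable cofinality, respectively coinitiality, from the side of $A$, respectively $B$. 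By density these gaps are approached by a countable subset of $M$, and one checks that $M\cap L$ contains no point of the open interval $(\sup A,\inf B)$; hence the required capturing cut must be one of these two gaps (or an interior gap of $L$ lying in $M$).

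The crux --- and the step I expect to be the main obstacle --- is therefore to guarantee that the gap $\sup A=\sup(M\cap L'_{<x})$, which depends on the point $x\notin M$, actually belongs to $M$. My plan for this is to replace a single $M$ by a continuous $\in$-chain $\langle M_\alpha:\alpha<\omega_1\rangle$ of countable elementary submodels with $L'\subseteq\bigcup_\alpha M_\alpha$, which is possible because $|L'|=\aleph_1$, and to exploit $GCH$ in the form $\aleph_1^{\aleph_0}=\aleph_1$: the gaps of $L$ of countable cofinality are suprema of countable increasing sequences from $L'$, so there are at most $\aleph_1$ of them and they may be listed in order type $\omega_1$ inside $M_0$. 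Arranging the chain so that at club-many stages $\delta$ the trace $M_\delta\cap L'$ and the initial segment of this list are synchronised, I would argue that for $x\in L'\setminus M_\delta$ the gap $\sup(M_\delta\cap L'_{<x})$ is already one of the gaps reflected into $M_\delta$, so $Z=M_\delta\cap\hat{L'}$ captures $x$. The delicate point, where the uncountable-character hypothesis and the cardinal arithmetic must be combined precisely, is exactly this reflection of an $x$-dependent gap into $Z$; once it is in place the family of such $Z$ is a club witnessing that $\Gamma(L')$ is non-stationary, and Proposition \ref{char} finishes the proof.
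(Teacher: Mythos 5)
The statement you set out to prove is, as printed, a typo: the conclusion the paper actually proves (and needs) is that $L'$ is \emph{not} $\sigma$-scattered. This is forced by the sentence introducing the Fact (these dense suborders are to be ``the witnesses for Theorem \ref{main}''), by its use in Section \ref{last} (the minimal non-$\sigma$-scattered order there is exactly an $\aleph_1$-sized dense suborder of $K$, which has size $\aleph_2$ and all of whose points have cofinality and coinitiality $\omega_1$), and by the paper's own proof, which is a contradiction argument: assuming $L'=\bigcup_{n<\omega}X_n$ with each $X_n$ scattered, density of $L'$ together with $\operatorname{cf}(x)=\omega_1$ implies that for each $x\in L$ some $X_n$ is cofinal below $x$ (countably many pieces cannot all be bounded below a point of uncountable cofinality), so pigeonhole over the $\aleph_2$ many $x\in L$ produces a single scattered $L_0\subseteq L'$ with $|\bar{L_0}|=\aleph_2$. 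Collapsing $L_0$ by $x\sim y$ iff at most $\aleph_1$ points of $\bar{L_0}$ lie between $x$ and $y$, each class accounts for at most $\aleph_1$ points of $\bar{L_0}$; hence there are at least two classes, and between any two distinct classes there must be a third. A transversal of the classes is then an infinite dense suborder of the scattered order $L_0$, a contradiction.

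Consequently your proposal is attempting to prove a false statement --- false under hypotheses that are consistent, since the paper's final model realizes them with $L'$ the minimal non-$\sigma$-scattered witness --- so it cannot be repaired, and the failure is located exactly at the step you flagged as delicate. Your framework up to that point is fine: capturing cuts must lie in $[\sup A,\inf B]$, those endpoints are gaps of countable cofinality (resp.\ coinitiality), and under $CH$ there are at most $\aleph_1$ many such gaps. But synchronizing an enumeration $\langle g_\alpha:\alpha<\omega_1\rangle$ of these gaps with your chain only guarantees that $M_\delta$ contains $\{g_\alpha:\alpha<\delta\}$ as elements; the gap you need, $\sup(M_\delta\cap L'_{<x})$, is the supremum of a set that is a \emph{subset} of $M_\delta$ but not an \emph{element} of $M_\delta$ (it is defined from $x\notin M_\delta$), so nothing constrains its index in the enumeration to be below $\delta$ --- the needed gap depends on $M_\delta$ itself, not only on $x$. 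Indeed, by Proposition \ref{char} and the corrected Fact, $\Gamma(L')$ is stationary, so for stationarily many $\delta$ there is $x\in L'\smallsetminus M_\delta$ such that neither $\sup(M_\delta\cap L'_{<x})$ nor $\inf(M_\delta\cap L'_{>x})$ lies in $M_\delta$, and no element of $M_\delta\cap\hat{L'}$ between them does either; no bookkeeping of the chain can avoid a stationary set. (A separate, benign slip: your claim that $M\cap L\cap(\sup A,\inf B)=\emptyset$ is unjustified, because points of $L$ have cofinality and coinitiality $\omega_1$ and hence a point of $M\cap L$ below $x$ need not be separated from $x$ by any point of $M\cap L'$; but this error only helps you, since such a point would itself capture $x$.)
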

\begin{proof}
Assume $L'$ is $\sigma$-scattered. 
Since all $x \in L$ have cofinality and coinitiality $\omega_1$, there is a scattered suborder $L_0$
of $L'$ whose closure in $L$ has cardinality $\aleph_2$. For $x,y \in L_0$ let $x\sim y$ if there are at most 
$\aleph_1$ many elements of the closure of ${L_0}$ in between $x$ and $y$. Note that there are exactly $\aleph_1$ many 
equivalence classes and between every two distinct equivalence classes there are  
$\aleph_1$ many, equivalence classes. Now let $L_1$ be a suborder of $L_0$ which intersect each equivalence
class at exactly one point. $L_1$ is an infinite dense linear order which contradicts scatteredness
of $L_0$.
\end{proof}

We will be using forcings which are not proper.
The rest of this section is devoted to the facts and lemmas which enable us to show that 
countable support iterations of these forcings are robust enough to preserve cardinals, under mild assumptions
like $\CH$. More discussion can be found in \cite{second} and \cite{proper_forcing}.

For a regular cardinal $\theta$, $H_\theta$ is the collection of all sets of hereditary cardinality less than $\theta$.
We assume $H_\theta$ is equipped with a fixed well ordering without mentioning it.
Assume $\mathcal{P}$ is an arbitrary set  
and $\theta$ is a regular cardinal such that $\mathcal{P}$ and the powerset
of $\mathcal{P}$ are in $H_\theta$. A countable elementary submodel 
$N$ of $H_\theta$ is said to be \emph{suitable} for $\mathcal{P}$
if $\mathcal{P} \in N$. 
If $\mathcal{P}$ is a forcing notion and $\langle p_n : n \in \omega \rangle$ is a decreasing sequence of conditions  
in $\mathcal{P}\cap N$, $\langle p_n : n \in \omega \rangle$
is said to be \emph{$(N,\mathcal{P})$-generic} if 
for all dense subsets $D$ of $\mathcal{P}$ that are in $N$, there is an $n \in \omega$ such that $p_n \in D$.

\begin{defn}
Assume $X$ is uncountable and $S \subset [X]^\omega$ is stationary. A poset $\mathcal{P}$ is 
said to be \emph{$S$-complete}, 
if every descending $(M, \mathcal{P})$-generic
sequence, $\langle p_n: n\in \omega \rangle$ has a lower bound, for all $M$ with $M \cap X \in S$ and
$M$ suitable for $X,\mathcal{P}$.
\end{defn}
\begin{fact} \cite{proper_forcing}
 Assume $X$ is uncountable and $S\subset [X]^\omega$ is stationary. If $\mathcal{P}$ is an $S$-complete
  forcing then it preserves $\omega_1$ and adds no new countable sequences of ordinals.
\end{fact}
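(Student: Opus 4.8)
The plan is to prove the stronger statement that $\mathcal{P}$ adds no new $\omega$-sequence of ordinals; this yields both conclusions. Indeed, it preserves $\omega_1$, since a collapse of $\omega_1$ would in particular produce a new surjection $\omega\to\omega_1$; and once $\omega_1$ is preserved, any sequence of countable length in the extension can be recoded as an $\omega$-sequence through a ground-model bijection, so no new countable sequence of ordinals is added either. Accordingly, fix a condition $p\in\mathcal{P}$ and a $\mathcal{P}$-name $\dot f$ with $p\Vdash \dot f\colon\omega\to\Ord$; I want to produce $q\leq p$ and a function $g\in V$ with $q\Vdash \dot f=\check g$.

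First I would choose the model. Fix a regular $\kappa$ large enough that $\mathcal{P},\dot f,p,S,X\in H_\kappa$. By the usual Skolem-hull argument, the collection of sets $M\cap X$, where $M\prec H_\kappa$ is countable and contains these parameters, contains a club in $[X]^\omega$. Since $S$ is stationary, I can select such an $M$ with the additional property $M\cap X\in S$. Thus $M$ is suitable for $X,\mathcal{P}$ and $M\cap X\in S$, so the $S$-completeness hypothesis will apply to any descending generic sequence built inside $M$.

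Next I would build a descending $(M,\mathcal{P})$-generic sequence $\langle p_n:n\in\omega\rangle$ below $p$ with every $p_n\in M$. Enumerate in ordertype $\omega$ both the dense subsets of $\mathcal{P}$ lying in $M$ and the integers $n$ whose values $\dot f(n)$ are to be decided, and interleave the two tasks. At stage $n$, given $p_n\in M$, first extend it into the $n$-th dense set, which is possible inside $M$ by elementarity, since both $p_n$ and that dense set lie in $M$; then extend further to a condition deciding $\dot f(n)$, again inside $M$ because the set of conditions deciding $\dot f(n)$ is dense and definable from parameters of $M$. Letting $g(n)$ be the ordinal that the sequence forces for $\dot f(n)$, we obtain $g\in V$, and by construction the sequence meets every dense set of $M$, hence is $(M,\mathcal{P})$-generic.

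Finally I would invoke $S$-completeness. Since $M\cap X\in S$ and $M$ is suitable, the descending $(M,\mathcal{P})$-generic sequence $\langle p_n:n\in\omega\rangle$ has a lower bound $q\leq p$. For each $n$ some $p_m$ forces $\dot f(n)$ to equal $g(n)$, and $q\leq p_m$, so $q\Vdash \dot f=\check g$, as required. The delicate point, and really the crux of the argument, is arranging \emph{simultaneously} that the sequence stays entirely inside $M$ (so that elementarity supplies each extension and $S$-completeness supplies the lower bound), that it meets every dense set of $M$, and that it decides every value of $\dot f$; the interleaved bookkeeping above is exactly what reconciles these three demands.
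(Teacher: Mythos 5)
Your proof is correct; the paper itself gives no proof of this fact (it is quoted from Shelah's \emph{Proper and Improper Forcing}), and your argument is the standard one that establishes it: project the stationarity of $S$ up to $[H_\kappa]^\omega$ to find a suitable countable $M\prec H_\kappa$ with $M\cap X\in S$, build a descending $(M,\mathcal{P})$-generic sequence below $p$ deciding every value $\dot f(n)$, and apply $S$-completeness to get a lower bound $q$ forcing $\dot f=\check g$. One small simplification: the interleaved bookkeeping is not actually needed, since for each $n$ the set of conditions that decide $\dot f(n)$ or are incompatible with $p$ is dense and lies in $M$, so \emph{any} $(M,\mathcal{P})$-generic sequence below $p$ automatically decides all values of $\dot f$.
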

\begin{cor} \cite{proper_forcing}
\begin{cor} \label{iteration}
Assume $X$ is uncountable and $S\subset[X]^\omega$ is stationary. Then $S$-completeness is 
preserved under countable support iterations.
\end{cor}
\end{cor}
We will use the following lemmas from \cite{second}.
Note that  no forcing can add a new cofinal branch or Aronszajn subtree to $T$
when $T$ has no Aronszajn subtree and has only countably many cofinal branches. 
\begin{lem} \label{No A subtree} \cite{second}
Assume $T$ is an $\omega_1$-tree 
which has uncountably many cofinal branches and which has no Aronszajn subtree 
in the ground model $\mathbf{V}$. Also assume
 $\Omega(T)\subset [\mathcal{B}(T)]^\omega$ is stationary  and
$\mathcal{P}$ is an $\Omega(T)$-complete forcing.
Then  $T$ has no Aronszajn subtree in $\mathbf{V}^\mathcal{P}$.
\end{lem}
 \begin{lem} \label{No New branch}
 Assume $T$ is an $\omega_1$-tree, $X$ is an uncountable set, 
$S \subset [X]^\omega$ is stationary, and 
$\mathcal{P}$ is an $S$-complete forcing. Then $\mathcal{P}$ does not add new cofinal branches to $T$.
 \end{lem}
The following definition is a modification of the Shelah's notion for chain condition, $\kappa$-proper
isomorphism condition. We will be using it for verifying certain chain conditions.
\begin{defn} \label{S-cic}
Assume $S,X$ are as above and  $\kappa$ is an regular cardinal.
 We say that $\mathcal{P}$ satisfies the \emph{$S$-closedness isomorphism condition for 
$\kappa$},
or $\mathcal{P}$ has the \emph{$S$-cic for $\kappa$} ,  if whenever
\begin{itemize}
\item
$M,N$ are suitable models for $\mathcal{P}$,
\item
both $M \cap X, N\cap X$ are in $S$,
\item
$h:M\rightarrow N$ is an isomorphism such that $h\upharpoonright (M\cap N) = id$,
\item
$\textrm{min}(N\setminus M \cap \kappa) > \sup(M\cap \kappa)$, and 
\item
$\langle p_n: n\in \omega \rangle $ is an $(M,\mathcal{P})$-generic sequence,
\end{itemize} 
then there is a common lower bound $q \in \mathcal{P}$ for
$\langle p_n: n\in \omega \rangle $ and $\langle h(p_n): n\in \omega \rangle $.
\end{defn}
\begin{lem} \label{chain}
Assume $2^{\aleph_0} < \kappa$, $\kappa$ is a regular cardinal and that $S,X$ are as above. 
If $\mathcal{P}$ satisfies the $S$-cic for $\kappa$ then it has the
 $\kappa$-c.c.
\end{lem}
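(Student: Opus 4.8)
The plan is to argue by contradiction: suppose $\langle p_\xi : \xi < \kappa \rangle$ enumerates an antichain of size $\kappa$ in $\mathcal{P}$. I will produce two indices $\xi \neq \eta$ together with an isomorphism $h$ satisfying all the hypotheses of the $S$-cic for $\kappa$, arranged so that $h$ carries an $(M,\mathcal{P})$-generic sequence lying below $p_\xi$ to one lying below $p_\eta$. The common lower bound $q$ delivered by Definition \ref{S-cic} then satisfies $q \le p_\xi$ and $q \le p_\eta$, contradicting $p_\xi \perp p_\eta$.

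First I would fix a regular $\theta$ with $\mathcal{P}, S, X, \langle p_\xi : \xi<\kappa\rangle \in H_\theta$ and build a continuous increasing $\in$-chain $\langle N_\alpha : \alpha < \kappa \rangle$ of elementary submodels of $H_\theta$, each of size $<\kappa$ and each with $N_\alpha \cap \kappa \in \kappa$, with all the parameters in $N_0$ and $\langle N_\beta : \beta \le \alpha\rangle \in N_{\alpha+1}$; this is possible since $\kappa$ is regular and $\kappa > 2^{\aleph_0} \ge \aleph_1$. Let $C = \{\alpha<\kappa : N_\alpha \cap \kappa = \alpha\}$, a club. For each $\alpha \in C$ I would use elementarity of $N_{\alpha+1}$ together with the stationarity of $S$ to choose, inside $N_{\alpha+1}$, a countable suitable model $M_\alpha$ with $M_\alpha \cap X \in S$ and $p_\alpha \in M_\alpha$, and an $(M_\alpha,\mathcal{P})$-generic descending sequence $\langle p^\alpha_n : n<\omega\rangle$ with $p^\alpha_0 \le p_\alpha$. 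The models are chosen so that the ordinals of $M_\alpha$ accumulate up to $\alpha$ while $M_\alpha$ itself is an element of $N_{\alpha+1}$; this interleaving is what will later yield both the identity-on-the-overlap clause and the ordinal-alignment clauses of the $S$-cic.

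The decisive step is a counting argument using $2^{\aleph_0} < \kappa$. Each $M_\alpha$ is countable, so its transitive collapse $\pi_\alpha \colon M_\alpha \to \bar M_\alpha$, together with the images $\pi_\alpha(p_\alpha)$, $\pi_\alpha[\langle p^\alpha_n\rangle]$ and the collapse of $\mathcal{P}\cap M_\alpha$ and of the designated ordinals, is coded by a single real. Since there are only $2^{\aleph_0} < \kappa$ such reals and $\kappa$ is regular, there is an unbounded $I \subseteq C$ on which all of this collapsed data is constant. For $\alpha<\beta$ in $I$ the map $h = \pi_\beta^{-1}\circ\pi_\alpha \colon M_\alpha \to M_\beta$ is then an isomorphism carrying $p_\alpha$ to $p_\beta$ and $\langle p^\alpha_n\rangle$ to $\langle p^\beta_n\rangle$, and the interleaving from the previous step guarantees that $h$ is the identity on $M_\alpha \cap M_\beta$ and that there are $\alpha_M \in M_\alpha\cap\kappa$, $\alpha_N\in M_\beta\cap\kappa$ with $h(\alpha_M)=\alpha_N$, $\sup(M_\alpha\cap\kappa)<\alpha_N$ and $M_\alpha\cap\alpha_M = M_\beta\cap\alpha_N$. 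Applying the $S$-cic for $\kappa$ to $M=M_\alpha$, $N=M_\beta$, $h$ and the generic sequence $\langle p^\alpha_n\rangle$ yields a common lower bound $q$ of $\langle p^\alpha_n\rangle$ and $\langle h(p^\alpha_n)\rangle = \langle p^\beta_n\rangle$; hence $q\le p^\alpha_0 \le p_\alpha$ and $q\le p^\beta_0\le p_\beta$, the desired contradiction.

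The hardest part is the bookkeeping in the second and third paragraphs that makes the overlap and ordinal clauses of Definition \ref{S-cic} hold simultaneously: one must place each $M_\alpha$ so that its ordinals lie cofinally below the stage $\alpha$ at which $N_\alpha \cap \kappa = \alpha$, and so that for $\alpha < \beta$ the common part $M_\alpha \cap M_\beta$ is an initial segment fixed by the collapses (so that $h$ restricts to the identity there), while $M_\beta$ contributes the ordinal $\alpha_N$ sitting strictly above $\sup(M_\alpha\cap\kappa)$. Everything else — the existence of the elementary chain, the choice of the countable models via stationarity of $S$, the existence of $(M_\alpha,\mathcal{P})$-generic sequences below $p_\alpha$, and the pigeonhole on transitive collapses — is routine once the parameters are correctly threaded through the construction.
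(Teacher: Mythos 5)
The paper never writes out a proof of Lemma \ref{chain} (it is stated as a modification of Shelah's $\kappa$-pic chain-condition argument from \cite{proper_forcing}, with the related iteration lemma deferred to \cite{second}), so your attempt has to be judged against that standard argument. Your skeleton is the right one: antichain, elementary chain $\langle N_\alpha\rangle$ with club $C$, countable suitable models $M_\alpha\ni p_\alpha$ with generic sequences below $p_\alpha$, a pigeonhole, and then the $S$-cic to produce a condition below two distinct antichain elements. But there is a genuine gap at exactly the point you defer to ``bookkeeping'': the third and fourth clauses of Definition \ref{S-cic}, namely $h\upharpoonright(M_\alpha\cap M_\beta)=\mathrm{id}$ and $M_\alpha\cap\alpha_M=M_\beta\cap\alpha_N$, do not follow from what you construct. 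Equality of transitive collapses is an isomorphism-type statement: it makes $h$ match the designated constants and gives an order isomorphism of $M_\alpha\cap\kappa$ with $M_\beta\cap\kappa$, but it says nothing about which \emph{actual} ordinals (or other sets) the two models have in common. Two countable models with identical collapsed data can share no ordinals at all below their designated points, or can overlap in a set on which $h$ moves every element. The interleaving $M_\alpha\in N_{\alpha+1}$ only yields $\sup(M_\alpha\cap\kappa)<\beta$ for later $\beta\in C$; it cannot force the initial segments to coincide as sets, nor the overlap to be pointwise fixed. Worse, your requirement that the ordinals of $M_\alpha$ accumulate up to $\alpha$ forces $\mathrm{cf}(\alpha)=\omega$, which is counterproductive: the standard argument works at indices of \emph{uncountable} cofinality precisely so that $\sup(M_\alpha\cap\alpha)<\alpha$ and a pressing-down argument applies.

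What is missing is a stabilization of actual sets, not isomorphism types. The Shelah-style repair is: restrict to $\{\alpha<\kappa:\mathrm{cf}(\alpha)\geq\omega_1\}$, apply Fodor to $\alpha\mapsto\sup(M_\alpha\cap\alpha)$ to get a uniform bound $\gamma^*$, then pigeonhole on the actual countable sets $M_\alpha\cap\gamma^*$ (equivalently, run the generalized $\Delta$-system lemma for countable sets) so that $M_\alpha\cap\alpha=M_\beta\cap\beta=a^*$ and the pairwise overlaps have a fixed root $R$; only then does a collapse pigeonhole, performed on structures expanded by constants for every element of $R$, give $h\upharpoonright(M_\alpha\cap M_\beta)=\mathrm{id}$, with $\alpha_M=\alpha$, $\alpha_N=\beta$ witnessing the ordinal clauses. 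Note also that these counting steps require $\mu^{\aleph_0}<\kappa$ for all $\mu<\kappa$, not merely the $2^{\aleph_0}<\kappa$ your collapse count uses (the number of countable subsets of $\gamma^*$ is $|\gamma^*|^{\aleph_0}$, which can exceed $\kappa$ in general); this is harmless in the paper's intended application, where $\kappa=\omega_2$ and GCH holds, but it shows the missing steps are a genuinely different counting argument rather than routine threading of parameters.
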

The proof of the following fact, which is useful in verifying the chain condition properties of an iteration
of posets, can be found in \cite{second}.
\begin{lem} \label{chain CS}
Suppose 
$\langle \mathcal{P}_i, \dot{\mathcal{Q}}_j: i\leq \delta, j < \delta \rangle$
is a countable support iteration of $S$-complete forcings, where $S\subset [X]^{\omega}$
is stationary and $X$ is uncountable. Assume in addition that
\begin{center}
$\Vdash_{\mathcal{P}_i} "\dot{\mathcal{Q}_i}$ has the $\check{S}$-cic for $\kappa$",
\end{center}
for all $i \in \delta$. Then $\mathcal{P}_\delta$ has the $S$-cic for $\kappa$.
\end{lem}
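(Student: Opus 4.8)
The plan is to argue by induction on the length $\delta$, proving the customary strengthened (``generic extension'') form of the statement: whenever $M,N,h,\alpha_M,\alpha_N$ satisfy the hypotheses of Definition \ref{S-cic} relative to $\mathcal{P}_\delta$, and $\gamma\in M$ with $\gamma\le\delta$, and $q\in\mathcal{P}_\gamma$ is simultaneously $(M,\mathcal{P}_\gamma)$- and $(N,\mathcal{P}_\gamma)$-generic and forces the tails $\langle p_n\upharpoonright[\gamma,\delta)\rangle$ and $\langle h(p_n)\upharpoonright[\gamma,\delta)\rangle$ to remain generic over $M[\dot G_\gamma]$ and $N[\dot G_\gamma]$, then $q$ extends to a common lower bound of $\langle p_n\rangle$ and $\langle h(p_n)\rangle$ in $\mathcal{P}_\delta$. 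Taking $\gamma=0$ recovers the lemma. Throughout I will use that, since the iteration data lies in $M\cap N$ and $h\upharpoonright(M\cap N)=id$, the map $h$ fixes $\delta$ and each $\mathcal{P}_\eta$ with $\eta\in M\cap N$, is order preserving on ordinals, and therefore carries every $(M,\mathcal{P}_\eta)$-generic sequence to an $(N,\mathcal{P}_\eta)$-generic sequence; moreover the cic hypotheses force $p_n$ and $h(p_n)$ to agree below the critical index $\alpha_M$ and to be supported on $h$-matched, otherwise disjoint, coordinates above it.

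For the successor passage $\delta=\eta+1$ with $\mathcal{P}_\delta=\mathcal{P}_\eta\ast\dot{\mathcal{Q}}_\eta$, if $\gamma=\delta$ there is nothing to do, so assume $\gamma\le\eta$. Applying the induction hypothesis at $\eta$ to $q$ and $\gamma$ produces a common lower bound $q_\eta\in\mathcal{P}_\eta$ for $\langle p_n\upharpoonright\eta\rangle$ and $\langle h(p_n)\upharpoonright\eta\rangle$ that is $(M,\mathcal{P}_\eta)$- and $(N,\mathcal{P}_\eta)$-generic. Forcing below $q_\eta$, inside $\mathbf{V}^{\mathcal{P}_\eta}$ the tails $p_n(\eta)$ and $h(p_n)(\eta)$ become $(M[\dot G_\eta])$- and $(N[\dot G_\eta])$-generic sequences in $\dot{\mathcal{Q}}_\eta$, the map $h$ induces an isomorphism $\bar h$ between these extensions fixing their intersection, and $\alpha_M,\alpha_N$ continue to witness the hypotheses of Definition \ref{S-cic} relative to $\dot{\mathcal{Q}}_\eta$. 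The assumption that $\Vdash_{\mathcal{P}_\eta}$ ``$\dot{\mathcal{Q}}_\eta$ has the $\check{S}$-cic for $\kappa$'' then yields a name $\dot r$ for a common lower bound of the two tails, and $q_\eta\concat\dot r$ is the desired condition in $\mathcal{P}_\delta$.

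The limit case is the main obstacle and is handled by a fusion coupled through $h$. Fix $\langle\gamma_n:n\in\omega\rangle$ increasing and cofinal in $\sup(M\cap\delta)$ with each $\gamma_n\in M$; then $\langle h(\gamma_n)\rangle$ is correspondingly cofinal in $\sup(N\cap\delta)$ and the $N$-side is throughout the literal $h$-image of the $M$-side. Starting from the given $q\in\mathcal{P}_\gamma$ I recursively build a coherent chain $q_n\in\mathcal{P}_{\gamma_n}$ with $q_{n+1}\upharpoonright\gamma_n=q_n$, applying the induction hypothesis on each proper initial segment $\mathcal{P}_{\gamma_n}$ (which has the $S$-cic since $\gamma_n<\delta$) to drive finitely more of the dense sets of $M$ that $\langle p_m\rangle$ must meet, while keeping each $q_n$ generic enough that the tails past $\gamma_n$ stay generic; by $h$-coupling the same recursion simultaneously absorbs the requirements for $\langle h(p_m)\rangle$ over $N$. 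The union $q=\bigcup_n q_n$ has support contained in $\sup(M\cap\delta)$, hence is a legitimate countable-support condition, and it is a common lower bound because every generic sequence's tail was eventually forced into each dense set of the respective model.

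The delicate points, and where $S$-completeness rather than full properness is essential, are: (i) verifying that the limit condition $q$ genuinely lies in $\mathcal{P}_\delta$ and bounds both sequences, which rests on the $S$-completeness of the iteration (the corollary following the definition of $S$-completeness) applied to $M$ with $M\cap X\in S$ and, via $h$, to $N$ with $N\cap X\in S$ — it is precisely the restriction to models whose trace lies in $S$ that guarantees the required lower bounds at every stage; and (ii) controlling the interaction of $h$ with the iteration coordinates and supports, so that on the shared coordinates below $\alpha_M$ the two sequences coincide and the genuinely $h$-moved coordinates above it are pairwise disjoint or else reconciled by the iterand's cic exactly as in the successor step. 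Carrying $h$, $\alpha_M$, $\alpha_N$ and the identity $M\cap\alpha_M=N\cap\alpha_N$ coherently through the recursion is the bookkeeping that closes the induction. This mirrors Shelah's proof that countable support iterations of proper forcings are proper, with properness replaced by $S$-completeness and a second, isomorphic, model threaded through every step.
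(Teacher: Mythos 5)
The paper itself does not actually prove Lemma \ref{chain CS}: it defers the proof to \cite{second}, describing the $S$-cic as a modification of Shelah's properness isomorphism condition, so the benchmark is the Shelah-style two-model preservation argument. That is indeed the architecture you adopt (induction on the length; successor step by lifting $h$ to an isomorphism $M[\dot G_\eta]\longrightarrow N[\dot G_\eta]$ and invoking the iterand's cic; limit step by fusion), and your successor step is essentially correct modulo standard facts: any common lower bound of $\langle p_n\upharpoonright\eta\rangle$ and $\langle h(p_n)\upharpoonright\eta\rangle$ forces $\dot G_\eta\cap M$ and $\dot G_\eta\cap N$ to be exactly the filters generated by the two sequences, hence to correspond under $h$, so $\bar h$ is well defined; and $S$-completeness guarantees $M[\dot G_\eta]\cap X=M\cap X\in S$ and that the ordinals $\alpha_M,\alpha_N$ still witness the hypotheses, since no new countable sequences of ordinals appear.

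The limit case, however, is where your construction genuinely fails, and the failure is structural rather than bookkeeping. The hypotheses of Definition \ref{S-cic} give $\sup(M\cap\kappa)<\alpha_N$ and $M\cap\alpha_M=N\cap\alpha_N$; hence (for $\delta\le\kappa$, the relevant case) $M\cap\delta\subseteq\alpha_N$, the common coordinates are exactly $M\cap\alpha_M\cap\delta$, and every coordinate in $N\cap\delta$ outside this common part lies above $\alpha_N$, i.e.\ \emph{strictly above} $\sup(M\cap\delta)$. Since a generic sequence $\langle p_m\rangle$ has supports growing up through $M\cap\delta$, once $\delta>\alpha_M$ the conditions $h(p_m)$ have nontrivial entries on $N\cap[\alpha_N,\delta)$, all above $\sup(M\cap\delta)$. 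Your fusion runs along $\gamma_n\in M$ cofinal in $\sup(M\cap\delta)$, and you assert that $q=\bigcup_n q_n$ has support contained in $\sup(M\cap\delta)$: such a $q$ is trivial precisely where the $h(p_m)$ live, so it cannot be a lower bound for $\langle h(p_m)\rangle$. The construction must continue past $\sup(M\cap\delta)$ through $N$'s private coordinates up to $\sup(N\cap\delta)$, and in that second, asymmetric phase neither your strengthened induction hypothesis (which is ill-typed there: you demand that $q\in\mathcal{P}_\gamma$ be $(N,\mathcal{P}_\gamma)$-generic for $\gamma\in M$, while the relevant stages $\gamma_n$, resp.\ $h(\gamma_n)$, eventually lie in $M\smallsetminus N$, resp.\ $N\smallsetminus M$) nor the cic of initial segments can be invoked as a black box, because $\mathcal{P}_{h(\gamma_n)}\notin M$. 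Getting below the tails $h(p_m)\upharpoonright[\alpha_N,\delta)$ using genericity over $N$ alone, while preserving what was built on $M$'s side, is the actual content of the lemma and is absent from your sketch. Relatedly, your claim that the cic hypotheses force $p_n$ and $h(p_n)$ to ``agree below $\alpha_M$'' is false: at a common coordinate $\xi<\alpha_M$ the entries $p_n(\xi)$ and $h(p_n)(\xi)=h(p_n(\xi))$ are different $\mathcal{P}_\xi$-names (the names $p_n(\xi)$ need not lie in $M\cap N$, which is all that $h$ fixes); reconciling them at every common coordinate, via the lifted isomorphism and the iterand's cic, is exactly why the hypothesis on the $\dot{\mathcal{Q}}_i$ is needed and cannot be replaced by literal agreement.
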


\section{The generic homogeneous Kurepa tree} \label{tree}
\begin{defn}
Assume $\langle M_\xi :\xi \in \omega_2 \rangle$ is a continuous $\in$-chain of $\aleph_1$-sized 
elementary submodels of $H_{({2^{\omega_2}})^+}$, such that
$\xi \cup \omega_1 \subset M_\xi$ and $\langle M_\eta :\eta  \leq \xi \rangle$ is in $M_{\xi+1}$. 
Fix $C\subset \omega_2$ consisting of all $\sup (M_\xi \cap \omega_2)$ for $\xi \in \omega_2$.
The poset $\mathcal{H}$ is the collection of all conditions $q=(T_q,b_q,\Pi_q)$ 
for which the following statements hold:
\begin{enumerate}
\item
$T_q$ is a countable tree of height $\alpha_q+1$ which is equipped with a lexicographic order such that
for all $t\in (T_q)_{<\alpha_q}$, the set $t^+$, consisting of all immediate successors of $t$,
is isomorphic to the rationals when considered with the lexicographic order.
\item
$b_q$ is a bijective function from a countable subset of $\omega_2$ to the last level of $T_q.$
\item
The collection , 
$\Pi_q = \langle \pi^q_{t,s} :(t,s) \in \bigcup_{\xi \in \alpha_q} ((T_q)_\xi )^2 \rangle$ 
such that $\pi^q_{t,s}$ is a tree isomorphism from $T_q(t)$  to $ T_q(s)$, which preserves the lexicographic order.
\item The collection $\Pi_q$ is coherent, in the sense that 
if $t'>t$ and $\pi^q_{t,s}(t')=s' $ then $\pi^q_{t',s'} = \pi^q_{t,s} \upharpoonright T_q(t')$.
\item The collection $\Pi_q$ is symmetric in the sense that $\pi^q_{s,t}=(\pi^q_{t,s})^{-1}$.
\item The collection $\Pi_q$ respects the club $C$ in the following sense. If $\alpha \in C$,
 $t,s$ are in $T_q$ and have the same height,
then $\xi < \alpha $ iff $b^{-1}_q(\pi^q_{t,s}(b_q(\xi)))< \alpha$. 
\item The collection  $\Pi_q$ respects the composition operation, in the sense that if $t,s,u$ are in $(T_q)_\xi$ and 
$\xi < \alpha_q$ then $\pi^q_{s,u} \circ \pi^q_{t,s}=\pi^q_{t,u}$.
\end{enumerate} 
For $p,q \in \mathcal{H}$ we let $q\leq p$ if 
\begin{enumerate}
\item
$(T_q)_{\leq \alpha_p}=T_p$ and the lex order on $T_p$ is the same as the one on $T_q$,
\item
$\dom(b_p)\subset \dom(b_q)$,
\item
for all $\xi \in \dom(b_p)$, $b_p(\xi)\leq b_q(\xi)$,
\item
for all $(t,s) \in \bigcup_{\xi \in \alpha_q}(T_p)_\xi^2 $, $\pi^p_{t,s}$ is equal to 
$\pi^q_{t,s} \upharpoonright T_p$, and
\item \label{branch index preserving}
for all  $(t,s) \in \bigcup_{\xi \in \alpha_q}(T_p)_\xi^2 $, and $\xi, \eta \in \dom(b_p)$, if
$\pi^p_{t,s}(b_p(\xi))=b_p(\eta)$ then $\pi^q_{t,s}(b_q(\xi))=b_q(\eta)$.
\end{enumerate}
\end{defn}
\begin{notation}
Assume $G$ is a generic filter for $\mathcal{H}$. Define  $T_G$ to be $\bigcup_{q\in G} T_q$ and
$b_\xi$ to be the branch $\{b_q(\xi): q \in G\}$. If $t,s$ are in $T_G$ and have the same height 
$\pi_{t,s}$ denotes $\bigcup_{q\in G} \pi^q_{t,s}$.
\end{notation}

\begin{lem} \label{ctbly closed}
$\mathcal{H}$ is $\sigma$-closed. 
\end{lem}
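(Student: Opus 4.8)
I would prove $\sigma$-closedness by taking a descending sequence $\langle q_n : n \in \omega\rangle$ and building an explicit greatest lower bound. First reduce to the interesting case: if the heights $\alpha_{q_n}$ are eventually constant, then ordering clause~1 forces the trees $T_{q_n}$ to stabilize, clause~4 forces the systems $\Pi_{q_n}$ to stabilize, and clauses~2--3 force the $b_{q_n}$ to agree on common domain with fixed values, so the union is at once a lower bound. Passing to a cofinal subsequence, I may therefore assume $\langle \alpha_{q_n}\rangle$ is strictly increasing with limit $\alpha := \sup_n \alpha_{q_n}$, a limit ordinal. Set $T^- := \bigcup_n T_{q_n}$. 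By clause~1 of the ordering this is a countable tree of height $\alpha$ populating every level below $\alpha$; moreover, since $\alpha$ is a limit, the immediate successors of any $t \in T^-$ live at a level $<\alpha$ and hence already form a copy of $\Qbb$ in some $T_{q_n}$, so clause~1 of the definition of $\mathcal{H}$ holds for $T^-$.

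Next I assemble the isomorphism system. For each $\xi<\alpha$ and each pair $t,s\in(T^-)_\xi$ put $\pi_{t,s}:=\bigcup_n \pi^{q_n}_{t,s}$; by coherence (ordering clause~4) these are well-defined lex-preserving tree isomorphisms between the corresponding cones of $T^-$, and by definition-clauses~5 and~7 they form a symmetric, composition-closed transitive system that acts on the cofinal branches of $T^-$. For $\xi\in D:=\bigcup_n\dom(b_{q_n})$ the $\leq$-increasing chain $\langle b_{q_n}(\xi) : \xi\in\dom(b_{q_n})\rangle$ determines a cofinal branch $\beta_\xi$ of $T^-$; let $\mathcal{B}$ be the smallest family of cofinal branches containing every $\beta_\xi$ and closed under all the $\pi_{t,s}$. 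As $T^-$ is countable there are only countably many maps $\pi_{t,s}$ and countably many generators, so $\mathcal{B}$ is countable. I then define $T_q$ by adjoining, for each $b\in\mathcal{B}$, a single new top node $t_b$ at height $\alpha$ lying above exactly the nodes of $b$, extending the lex order by the usual first-difference comparison of branches (consistent precisely because every $\pi_{t,s}$ preserves the lex order), extending each isomorphism by $\pi^q_{t,s}(t_b):=t_{\pi_{t,s}(b)}$ (well-defined by closure of $\mathcal{B}$), and setting $b_q(\xi):=t_{\beta_\xi}$ for $\xi\in D$, with fresh labels from $\omega_2\smallsetminus D$ assigned to the remaining branches of $\mathcal{B}$. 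It then remains to check clauses~1--7 for $q$ and clauses~1--5 of $q\leq q_n$; these follow by construction, the label-preservation clause~5 holding because $\pi_{t,s}(\beta_\xi)=\beta_\eta$ exactly when $\pi^{q_n}_{t,s}(b_{q_n}(\xi))=b_{q_n}(\eta)$ at finite stages.

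\textbf{The main obstacle.} The delicate point is the \emph{simultaneous} extension of the entire system $\Pi$ to the new level together with a labeling $b_q$ into $\omega_2$ that still satisfies the club condition~6 (and composition condition~7). The composition law makes $\{\pi_{t,s}\}$ genuinely transitive, so $\mathcal{B}$ decomposes into orbits; each orbit already meets some old branch $\beta_\xi$, and condition~6 for the $q_n$ (inherited through the finite chains of maps connecting the relevant $b_{q_n}$-values) pins down a single $C$-block in which all labels of that orbit must lie. Since the blocks determined by $C$ are infinite, there is room to place the countably many fresh labels inside the prescribed blocks, so condition~6 is preserved. Finally, the reason the whole construction stays inside $\mathcal{H}$ is that the new top level $\mathcal{B}$ is countable, which is exactly what the $\pi$-closure of a countable generating set over the countable tree $T^-$ guarantees.
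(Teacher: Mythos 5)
Your construction is, at bottom, the same as the paper's: form the union tree $T^-=\bigcup_n T_{q_n}$, adjoin a top level consisting of upper bounds of branches, extend each $\pi^{q_n}_{t,s}$ branchwise to that level, and let $b_q$ send each label to the top of its branch. Where you diverge is in the treatment of what you call the main obstacle, and that is precisely where your writeup does not hold up. The obstacle is illusory: the family $\{\beta_\xi : \xi \in D\}$, $D=\bigcup_n \dom(b_{q_n})$, is \emph{already} closed under the maps $\pi_{t,s}$, so your closure $\mathcal{B}$ adds no new branches and no fresh labels are ever needed. To see this, fix $\xi \in D$ and $t,s$ of the same height with $t \in \beta_\xi$, and take $n$ large enough that $t,s \in T_{q_n}$ and $\xi \in \dom(b_{q_n})$. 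By clause 2 of the definition of $\mathcal{H}$, $b_{q_n}$ is a bijection between a countable subset of $\omega_2$ and the last level of $T_{q_n}$, so $\pi^{q_n}_{t,s}(b_{q_n}(\xi))=b_{q_n}(\eta)$ for some $\eta \in \dom(b_{q_n})$; clause 5 of the ordering then propagates this same identity to every $m\geq n$, i.e.\ $\pi^{q_m}_{t,s}(b_{q_m}(\xi))=b_{q_m}(\eta)$ with the \emph{same} $\eta$. Hence $\pi_{t,s}[\beta_\xi]=\beta_\eta$. This stabilization is exactly what the paper uses when it asserts that $\bigcup_n \pi^{q_n}_{t,s}$ "can be extended to the last level": the extension sends $b_q(\xi)\mapsto b_q(\eta)$, and clause 6 for the lower bound is inherited verbatim from clause 6 for the $q_n$, since the ordinals $\xi,\eta$ are the same at every stage; no choice of labels, and hence no discussion of $C$-blocks, ever arises.

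By contrast, your fallback verification of clause 6 --- placing fresh labels inside the prescribed blocks "since the blocks determined by $C$ are infinite" --- is not justified: $C$ is simply a club in $\omega_2$ fixed with the definition of $\mathcal{H}$, and nothing excludes consecutive elements of $C$, i.e.\ singleton blocks. So if the closure $\mathcal{B}$ could genuinely produce unlabeled branches, your proof would be stuck at exactly the step you flagged as delicate; it is rescued only because that case is vacuous, for the reason above. The repair is to replace the orbit/fresh-label discussion by the stabilization observation, which both fills the gap and collapses your construction to the paper's shorter one.
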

\begin{proof}
Let $\langle p_n:n\in \omega  \rangle$ be a decreasing sequence in $\mathcal{H}$ and
$\sup(\alpha_{p_n})_{n \in \omega} = \alpha$. Let $T=\bigcup_{n\in \omega} T_{p_n}$. 
Note that $(b_{p_n}(\xi): n \in \omega)$ is a cofinal chain in $T$ for all
$\xi \in \bigcup_{n \in \omega} \dom(b_{p_n})$.
Let $T_q$ be a countable tree of height $\alpha +1$ such that
\begin{itemize}
\item $(T_q)_{<\alpha}=T$,
\item for all $\xi \in \bigcup_{n \in \omega} \dom(b_{p_n}) $, $(b_{p_n}(\xi): n \in \omega)$ has an
upper bound in $T_q$, and
\item every element of height $\alpha$ is an upper bound for $(b_{p_n}(\xi): n \in \omega)$,
for some $\xi \in \bigcup_{n \in \omega} \dom(b_{p_n}) $.
\end{itemize}
Now let $q$ be the condition with $\alpha_q=\alpha$ and  $T_q$ as above. 
Let $b_q$ be the function form $\bigcup_{n\in \omega} \dom(b_{p_n})$ to the last level of $T_q$
such that for all $\xi$ in the domain, $b_q(\xi)$ is the upper bound for the chain 
$(b_{p_n}(\xi): n \in \omega)$.
Similarly $\bigcup_{n\in \omega} \pi^{p_n}_{t,s}$, can be extended to the last level of 
$T_q$, for all $t,s$ that are of the same height and are in $T$. 
It is easy to see that the condition $q$ described above is a lower bound for the sequence 
$\langle p_n:n\in \omega  \rangle$. 
\end{proof}
\begin{lem}
$\GCH$ implies that $\mathcal{H}$ has the $\aleph_2$-cc.
\end{lem}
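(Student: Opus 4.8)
The plan is to prove the $\aleph_2$-cc by a standard $\Delta$-system plus amalgamation argument, using $GCH$ to control the number of conditions up to a suitable isomorphism type. First I would fix an antichain $\{q_\iota : \iota < \omega_2\}$ and aim for a contradiction by finding two conditions that are compatible. Each condition $q$ is coded by the countable tree $T_q$ (with its lex order), the finite-to-countable data $b_q$, and the system $\Pi_q$. Since $GCH$ gives $2^{\aleph_0} = \aleph_1$, there are only $\aleph_1$ many isomorphism types of the pair $(T_q, \Pi_q)$ once we forget the actual ordinals appearing in $\dom(b_q)$; the only genuinely $\aleph_2$-sized information is the subset $\dom(b_q) \subset \omega_2$.

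Next I would apply the $\Delta$-system lemma to the countable sets $\dom(b_{q_\iota}) \subset \omega_2$ to obtain an unbounded $I \subset \omega_2$ such that $\{\dom(b_{q_\iota}) : \iota \in I\}$ forms a $\Delta$-system with root $R$. By shrinking $I$ further via a pigeonhole argument (using $\aleph_1 < \aleph_2$), I would arrange that for $\iota, \iota' \in I$ the conditions agree on the common part: the trees $T_{q_\iota}$ and $T_{q_{\iota'}}$ are equal, the isomorphism systems $\Pi_{q_\iota}, \Pi_{q_{\iota'}}$ are equal, $b_{q_\iota} \upharpoonright R = b_{q_{\iota'}} \upharpoonright R$, and there is a single order-isomorphism between $\dom(b_{q_\iota})$ and $\dom(b_{q_{\iota'}})$ fixing $R$ and matching up the branches indexed by the two copies of the $\Delta$-system. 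This uses the coherence, symmetry, and composition clauses (3)--(5),(7) to ensure the isomorphism data is literally repeated, not just abstractly isomorphic.

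Having frozen two conditions $p = q_\iota$ and $p' = q_{\iota'}$ that agree on everything except the indices in their private parts of the $\Delta$-system, I would build a common lower bound $q$. I take $\alpha_q = \alpha_p$ and $T_q = T_p = T_{p'}$, set $\dom(b_q) = \dom(b_p) \cup \dom(b_{p'})$, and define $b_q$ to agree with both $b_p$ and $b_{p'}$ on their respective domains --- this is consistent precisely because they agree on $R$ and the private indices map to distinct branches. The key point is extending $\Pi_q$ so that clause (5) of the ordering is satisfied: whenever $\pi^p_{t,s}(b_p(\xi)) = b_p(\eta)$ I need $\pi^q_{t,s}(b_q(\xi)) = b_q(\eta)$, and symmetrically for $p'$. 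Because the two conditions carry the \emph{same} system $\Pi$ and the isomorphism matching their domains is chosen to respect this action, the two sets of constraints on $\pi_{t,s}$ are mutually consistent, so no conflict arises and $q \in \mathcal{H}$ is a genuine common extension, contradicting that the $q_\iota$ formed an antichain.

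The main obstacle will be the verification in the last paragraph that clause (6), which forces $\Pi_q$ to respect the club $C$ in terms of $b_q^{-1}$, remains compatible after we merge the two index sets; one must check that for each relevant height the permutation $\xi \mapsto b_q^{-1}(\pi_{t,s}(b_q(\xi)))$ sends indices below any $\alpha \in C$ to indices below $\alpha$, simultaneously for both families of indices. This is where the earlier pigeonhole refinement of $I$ must be strong enough: I would demand that the order-isomorphism between the private parts of $\dom(b_p)$ and $\dom(b_{p'})$ preserves membership in $C$ (i.e.\ respects the cuts determined by $C$ on $\omega_2$), which is arrangeable since $C$ induces only countably many relevant ``positions'' on each countable index set and there are $\aleph_2$ conditions to thin out. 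Granting that refinement, clause (6) for $q$ follows from clause (6) for $p$ and $p'$ together with the fact that the club data is identical on the shared part, completing the amalgamation.
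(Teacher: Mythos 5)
There are two genuine gaps here; the first is repairable, the second breaks the structure of your argument.

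The repairable one is your amalgamation step. You keep the same height ($\alpha_q=\alpha_p$, $T_q=T_p$) and set $b_q=b_p\cup b_{p'}$, justifying injectivity by saying ``the private indices map to distinct branches.'' This is exactly backwards under your own thinning: you arranged a domain isomorphism $\iota$ ``matching up the branches,'' i.e.\ $b_{p'}\circ\iota=b_p$, so $b_p$ and $b_{p'}$ have the \emph{same} range and every private index of $p$ collides with a private index of $p'$. Nor can disjointness of ranges be arranged by further thinning: all ranges lie in the fixed countable top level $T_{\alpha_p}$, so among $\aleph_2$ conditions with nonempty private parts two always share a node. A common lower bound must strictly increase the height: add a new top level, give each index of $\dom(b_p)\cup\dom(b_{p'})$ its own new node above its old one, and extend $\Pi$ to the new level, as in the proof of $\sigma$-closedness.

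The fatal one is that your thinning hypotheses simply do not imply compatibility, because they do not control the interaction of clause 6 with clause 5 of the ordering. Take $p,p'$ with the same tree and the same $\Pi$, root $R=\{\gamma_0\}$ with $b_p(\gamma_0)=b_{p'}(\gamma_0)$, private indices $\zeta_1\in\dom(b_p)$ and $\zeta_2=\iota(\zeta_1)\in\dom(b_{p'})$ with $b_p(\zeta_1)=b_{p'}(\zeta_2)=v$, and suppose some $\pi_{t,s}$ in $\Pi$ satisfies $\pi_{t,s}(b_p(\gamma_0))=v$. If $\gamma_0,\zeta_1,\zeta_2$ all lie in a single gap of $C$, clause 6 holds in both conditions (it is vacuous inside a gap), and this pair passes \emph{every} test you impose: same tree, same $\Pi$, $\Delta$-system, matched branches, and $\iota$ both preserves membership in $C$ and respects the cuts of $C$. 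Yet $p\perp p'$: clause 5 of the ordering forces any $q\le p,p'$ to satisfy $\pi^q_{t,s}(b_q(\gamma_0))=b_q(\zeta_1)$ and also $=b_q(\zeta_2)$, contradicting injectivity of $b_q$. So after your thinning the family can still be pairwise incompatible, and no amalgamation can finish. This is precisely what the paper's proof is designed to avoid: instead of the abstract $\Delta$-system lemma plus a patch, it presses down the regressive function $\xi\mapsto\sup(\dom(b_{q_\xi})\cap\xi)$ on the club $C$ itself, obtaining a family in which the root lies below $\alpha$ while each condition's private part lies entirely above its own index $\xi\in C$. With a point of $C$ separating root from private part, clause 6 forbids the $\Pi$-action inside any single condition from moving a root index to a private index (or conversely), and only then do the two conditions' clause-5 constraints merge consistently. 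Your proposed substitute cannot be salvaged: requiring $\iota$ to respect the cuts of $C$ is unattainable for $\aleph_2$ many conditions, since their pairwise disjoint private parts would all have to meet the same gaps of $C$ while each gap of a club in $\omega_2$ has cardinality at most $\aleph_1$; and merely preserving membership in $C$ is attainable but, as the example shows, does nothing.
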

\begin{proof}
Let $\langle q_\xi: \xi \in \omega_2 \rangle $ be a collection of conditions in  $\mathcal{H}$.
Since there are $\aleph_1$-many possibilities for $T_q$ and $\Pi_q$,
we can thin down this collection to a subset of the 
same cardinality so that $T_{q_\xi}$ and $\Pi_{q_\xi}$ do not depend on $\xi$.
Now define $f:C \longrightarrow \omega_2$ by $f(\xi)=\sup(\dom(b_{q_\xi})\cap \xi)$, where $C$ is the club that 
all elements of $\mathcal{H}$ respect. Note that for all $\xi \in C$ with $cf(\xi)>\omega$, $f(\xi) < \xi$.
So there is a stationary $S\subset C$, and $\alpha\in \omega_2$ such that $f\upharpoonright S$ is the 
constant $\alpha$. We can thin down $S$ to a stationary subset $S'$ if necessary, so that 
in $\langle q_\xi: \xi \in S' \rangle $,
$\dom(b_{q_\xi})\cap \alpha$ and $b_{q_\xi}\upharpoonright \alpha$ do not depend on $\xi$.
Let $S''\subset S' \setminus (\alpha+1)$ 
be of size $\aleph_2$ and whenever $\xi < \eta $ are in $S''$, 
$\sup(\dom(b_{q_\xi}))<\eta$. 
Note that $\langle b_{q_\xi}: \xi \in S'' \rangle$ forms a $\Delta$-system with root $r$
such that the $\dom(r)\subset \alpha$. Moreover for all $\xi \in S''$, 
$\min(\dom(b_{q_\xi})\setminus r)\geq \xi$.
Since $S''\subset C$, every two conditions in $\langle q_\xi: \xi \in S'' \rangle $ are compatible.
\end{proof}

The following can routinely be verified.

\begin{fact}
The following sets are dense in $\mathcal{H}$.
\begin{itemize}
\item $H_\alpha:=\{q \in \mathcal{H}| \alpha_q>\alpha\}$.
\item For $\xi \in \omega_2,$  $I_\xi:=\{ q \in \mathcal{H}: \xi \in \dom(b_q) \}$.
\end{itemize}
\end{fact}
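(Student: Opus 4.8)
The plan is to reduce both density claims to a single \emph{one-step extension}: given any $p\in\mathcal{H}$, build $q\leq p$ with $\alpha_q=\alpha_p+1$. Granting this, density of $H_\alpha$ (only interesting for $\alpha<\omega_1$, since every $T_q$ is countable and so $\alpha_q<\omega_1$) follows by iterating the one-step extension through successors and invoking $\sigma$-closedness (Lemma~\ref{ctbly closed}) at countable limit stages, continuing until the height exceeds $\alpha$; density of $I_\xi$ follows by performing one such step in the nontrivial case $\xi\notin\dom(b_p)$ and arranging that $\xi$ enters $\dom(b_q)$ when the new branch function is defined. Thus the whole content lies in the successor step, and specifically in honouring the groupoid clauses (4),(5),(7) on $\Pi$ and the club clause (6) on $b$ at the same time.

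For the tree and the isomorphisms I would first give every node $t$ of the current top level $(T_p)_{\alpha_p}$ a set $t^+$ of immediate successors ordered like $\Qbb$, declare these to be the new top level $\alpha_p+1$, and extend the lexicographic order in the evident way. Fix for each such $t$ an order isomorphism $\iota_t\colon t^+\to\Qbb$, and then define \emph{all} the required maps by the single rule ``preserve the $\Qbb$-coordinate'': for $x\in t^+$ set $\pi^q_{t,s}(x)=\iota_s^{-1}(\iota_t(x))$ for $t,s$ at level $\alpha_p$, and extend each old $\pi^p_{u,v}$ (with $u,v$ at a level $<\alpha_p$) to the new level by $\pi^q_{u,v}(x)=\iota_{t'}^{-1}(\iota_t(x))$ where $x\in t^+$ and $t'=\pi^p_{u,v}(t)$. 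With this uniform definition the symmetry, coherence and composition clauses (5),(4),(7) of the definition of a condition hold automatically, because on the new level each side of each equation merely transports the $\Qbb$-coordinate along the old maps, and those already satisfy the groupoid laws on $T_p$.

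The delicate point is the branch function. Under $\Pi_q$ the orbit of a new top node $x\in t^+$ is exactly the set of coordinate-$\iota_t(x)$ nodes, one in each $s^+$; these orbits are countable and indexed by $\Qbb$, and clause (6) forces each orbit to receive $b_q$-values lying in a single $C$-class, where a $C$-class is a maximal set of ordinals occupying the same position relative to $C$ (here one uses that every $C$-class of the fixed club $C$ is infinite). The ordering clauses (3) and (5) pin the old branch points: each $\eta\in\dom(b_p)$ must go to a node of $b_p(\eta)^+$, and any $\eta,\eta'$ that were paired through some $\pi^p_{u,v}$ must land in one common orbit. This is consistent precisely because, by clause (6) applied to $p$ itself, two such $b_p$-values already share a $C$-class. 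I would therefore assign to each equivalence class of old branch points a distinct $\Qbb$-coordinate, using only a co-infinite set of coordinates, set $b_q(\eta)$ to the corresponding node, and then fill every orbit injectively with ordinals from its designated $C$-class; infinitude of the classes and the size of $\omega_2$ leave ample room to complete $b_q$ to a bijection onto the new top level. For $I_\xi$ I simply reserve one still-free orbit, declare its $C$-class to be that of $\xi$, and let $\xi$ be among the ordinals placed on it.

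The main obstacle, as indicated, is not the tree extension itself but the simultaneous compatibility of the rigid groupoid $\Pi_q$ with the club-sensitive bookkeeping of $b_q$: the isomorphisms force whole coordinate-orbits into a common $C$-class, so one must be sure that the constraints inherited from $p$ through clauses (3),(5),(6) never require a single orbit to meet two distinct $C$-classes. The coordinate scheme above makes this transparent, and the remaining verification that the $q$ so defined is a genuine condition below $p$ is then a routine check of the seven defining clauses and the five ordering clauses.
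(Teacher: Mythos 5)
Your overall plan is sound, and it is worth saying that the paper offers no argument at all for this Fact (it is simply declared routine): reducing both densities to a single one-step height extension, handling limits with Lemma~\ref{ctbly closed}, and defining all new isomorphisms by the uniform ``preserve the $\Qbb$-coordinate'' rule is exactly the right way to make the symmetry, coherence and composition clauses automatic. The gap is in the last step, where you ``fill every orbit injectively with ordinals from its designated $C$-class'' and justify this by ``infinitude of the classes and the size of $\omega_2$.'' Infinitude of the $C$-classes is not enough, because the class you need may already be exhausted: $\dom(b_p)$ is an arbitrary countable subset of $\omega_2$, so if the $C$-classes are countably infinite (say $C$ is the set of limit ordinals below $\omega_2$), a condition $p$ can have an entire class $D$ contained in $\dom(b_p)$ while $b_p[D]$ is a proper subset of the top level $(T_p)_{\alpha_p}$. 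Your scheme then assigns to some old equivalence class inside $D$ a coordinate whose orbit has slots above nodes outside $b_p[D]$, and those slots need fresh indices from $D$ --- of which there are none. Moreover this cannot be repaired by merging classes more cleverly: if $D\subseteq\dom(b_p)$ is a full $C$-class with $b_p[D]\neq (T_p)_{\alpha_p}$, then (reading clause (2) as you do, with $b_q$ a bijection onto the last level) $p$ has \emph{no} extension of greater height at all. Indeed, suppose $r\leq p$ with $\alpha_r>\alpha_p$; pick $\xi\in D$ and $y\in (T_p)_{\alpha_p}\smallsetminus b_p[D]$, and set $x=b_p(\xi)$. Since $x,y$ lie at a level below $\alpha_r$, the map $\pi^r_{x,y}$ exists and sends $b_r(\xi)\geq x$ to a top node $z$ of $T_r$ above $y$; writing $z=b_r(\eta)$, clause (6) forces $\eta$ into the $C$-class of $\xi$, i.e.\ $\eta\in D\subseteq\dom(b_p)$, whence $z=b_r(\eta)\geq b_p(\eta)\in b_p[D]$; but $z\geq y$ as well, and $y\neq b_p(\eta)$ are distinct nodes of the same height, a contradiction. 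Such pathological conditions do arise; for instance the paper's own $(M,\mathcal{H})$-generic lower bounds have $\dom(b_q)=M\cap\omega_2$, which swallows whole classes whenever the classes have order type $\omega$.

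So for the statement to be true and your proof to close, the two pieces of data the paper leaves implicit must be pinned down, and your choice of hypotheses does not do it. Either the fixed club $C$ must have all of its classes \emph{uncountable} (e.g.\ $C=\{\omega_1\cdot\alpha:\alpha<\omega_2\}$): then a countable domain can never exhaust a class, your filling step always has room, and the rest of your argument is correct as written. Or else clause (2) of the definition of $\mathcal{H}$ must be read as requiring $b_q$ to be merely injective into the last level rather than onto it: then you should simply not complete $b_q$ to a bijection --- place the old indices by your coordinate rule (distinct coordinates for distinct equivalence classes, which keeps clause (6) satisfiable), put $\xi$ on a still-free coordinate for the $I_\xi$ case, and stop. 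As written, with merely infinite classes and a surjective $b_q$, your filling step fails, and in fact the density claim itself is false below the conditions described above.
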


The proof of the following lemma is the same as Lemma \ref{ctbly closed}.

\begin{lem}
If $M$ is suitable for $\mathcal{H}$ and $\langle p_n: n \in \omega \rangle$ is a decreasing 
$(M,\mathcal{H})$-generic sequence, then there is a lower bound $q$ for 
$\langle p_n:n\in \omega \rangle$ such that 
$\dom(b_q)=M\cap \omega_2$, and $\alpha_q=M\cap \omega_1$. 
\end{lem}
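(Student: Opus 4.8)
The plan is to build $q$ by exactly the construction of Lemma~\ref{ctbly closed}, and then read off the three required equalities from genericity together with the density facts for $H_\alpha$ and $I_\xi$. Put $\alpha=\sup_n\alpha_{p_n}$ and $T=\bigcup_n T_{p_n}$. Since each $p_n$ lies in $M$ and $\alpha_{p_n}$, $dom(b_{p_n})$ are countable, elementarity gives $\alpha_{p_n}\in M\cap\omega_1$ and $dom(b_{p_n})\subset M\cap\omega_2$; in particular $\alpha\le M\cap\omega_1$ and $\bigcup_n dom(b_{p_n})\subset M\cap\omega_2$. As in Lemma~\ref{ctbly closed}, for each $\xi\in\bigcup_n dom(b_{p_n})$ the chain $\langle b_{p_n}(\xi):n\in\omega\rangle$ is (eventually defined,) increasing and cofinal in $T$, and by injectivity of the $b_{p_n}$ distinct indices $\xi\ne\xi'$ determine distinct cofinal branches of $T$. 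I would then let $T_q$ be a countable tree of height $\alpha+1$ whose levels below $\alpha$ agree with $T$ and whose top level $(T_q)_\alpha$ consists of exactly one new node capping each of these branches, set $b_q(\xi)$ to be the cap of the branch of $\langle b_{p_n}(\xi):n\rangle$, and extend each $\bigcup_n\pi^{p_n}_{t,s}$ to the top level in the unique coherent way. This makes $b_q$ a bijection from $\bigcup_n dom(b_{p_n})$ onto $(T_q)_\alpha$.

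The verification that $q\in\mathcal{H}$ and that $q\le p_n$ for all $n$ is identical to Lemma~\ref{ctbly closed}; the only clause worth a word is condition~6, which is inherited from the $p_n$: if $\pi^q_{t,s}(b_q(\xi))=b_q(\eta)$ then $\pi^{p_n}_{t,s}(b_{p_n}(\xi))=b_{p_n}(\eta)$ for all large $n$, so for $\alpha'\in C$ we get $\xi<\alpha'\Longleftrightarrow\eta<\alpha'$ directly from condition~6 for $p_n$. By the choice of the top level, $ran(b_q)=(T_q)_{\alpha_q}$, so the third equality comes for free from the construction.

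It remains to promote the two inclusions to equalities using genericity. For $\xi\in M\cap\omega_2$ the dense set $I_\xi=\{r\in\mathcal{H}:\xi\in dom(b_r)\}$ lies in $M$ (its defining parameter is in $M$), so some $p_n\in I_\xi$ and hence $\xi\in dom(b_{p_n})\subset dom(b_q)$; this yields $dom(b_q)=\bigcup_n dom(b_{p_n})=M\cap\omega_2$. Likewise, for $\beta\in M\cap\omega_1$ the dense set $H_\beta=\{r\in\mathcal{H}:\alpha_r>\beta\}$ lies in $M$, so some $\alpha_{p_n}>\beta$; as this holds for every $\beta<M\cap\omega_1$ we get $\alpha=\sup_n\alpha_{p_n}=M\cap\omega_1=\alpha_q$. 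Since $M\cap\omega_1$ is a limit ordinal, this is consistent with $\alpha_q$ being a genuine limit level of $T_q$.

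The routine part is the tree-building of the first paragraph, which duplicates Lemma~\ref{ctbly closed}, and I expect the genericity arguments of the third paragraph to present no real obstacle beyond noting that $I_\xi,H_\beta\in M$. The single point to watch is arranging $(T_q)_{\alpha_q}$ to cap exactly the branches indexed by $dom(b_q)$ — neither more nor fewer — so that $b_q$ is \emph{onto} the top level; this is precisely what forces $ran(b_q)=(T_q)_{\alpha_q}$ and is the only genuine deviation from the plain $\sigma$-closedness construction.
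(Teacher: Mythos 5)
Your proof is correct and follows exactly the route the paper intends: the paper's own proof is literally the remark that the argument is the same as Lemma~\ref{ctbly closed}, and you reproduce that construction while supplying the routine genericity bookkeeping (density of $H_\beta$ and $I_\xi$ in $M$) that upgrades the inclusions to the equalities $\dom(b_q)=M\cap\omega_2$, $\alpha_q=M\cap\omega_1$, and $ran(b_q)=(T_q)_{\alpha_q}$. No gaps beyond those already implicit in the paper's treatment of Lemma~\ref{ctbly closed} (e.g.\ extending the maps $\pi_{t,s}$ to the top level), so nothing further is needed.
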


\begin{fact} \label{nice tree}
\begin{itemize}
\item Assume $G$ is a generic filter for $\mathcal{H}$. Then the generic tree
$T:= \bigcup_{q\in G}T_q$ is a Kurepa tree such that  
$\langle \{b_q(\xi): q \in G \} : \xi \in \omega_2 \rangle$
is an enumeration of the set of all branches.
\item $T$ has no Aronszajn subtree. Moreover, any uncountable downward closed subtree of $T$
contains a branch $b_\xi$ for some $\xi \in \omega_2$.
\item  Assume $L$ is the linear order consisting of all branches of $T$, $\mathcal{B}(T)$,
ordered by the lexicographic order of the tree $T$. Then $\Omega(L)$ is stationary.
\end{itemize}
\end{fact}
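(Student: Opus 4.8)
The plan is to prove all three items from a single device, namely the lower bound supplied by the lemma immediately preceding this fact. Fix a large regular $\theta$ and a suitable countable $M\prec H_\theta$ having $\mathcal{H}$, the club $C$, and (depending on the item) a relevant name as elements; write $\delta=M\cap\omega_1$ and $W=M\cap\omega_2$. Given a condition $p\in M$, I would build a decreasing $(M,\mathcal{H})$-generic sequence $\langle p_n:n\in\omega\rangle\le p$ and let $q$ be the lower bound guaranteed by that lemma, so that $\dom(b_q)=W$, $\alpha_q=\delta$, and $\mathrm{ran}(b_q)=(T_q)_\delta$. Two features do all the work: (i) at level $\delta$ every node of $T_q$ is $b_q(\xi)=b_\xi(\delta)$ for some $\xi\in W$, and (ii) $\langle p_n\rangle$ meets every dense subset of $\mathcal{H}$ that belongs to $M$.

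First the elementary structure. By density of the $H_\alpha$ the tree $T$ has height $\omega_1$; density of $H_\alpha$ together with the requirement $(T_q)_{\le\alpha_p}=T_p$ in the order makes each level countable and fixed once it first appears; and the limit construction of Lemma \ref{ctbly closed} extends only the distinguished chains, so $T$ does not branch at limits and is an $\omega_1$-tree. Each $b_\xi$ is a chain by the monotonicity clause of $\le$, is cofinal by density of $I_\xi$ and the $H_\alpha$, and distinct indices give distinct branches since $b_q$ is injective and trees do not re-merge; hence the $b_\xi$ are $\aleph_2$ pairwise distinct cofinal branches and $T$ is Kurepa.

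For the second and third items, suppose toward a contradiction that $p$ forces $\dot U$ to be an uncountable downward closed subtree with $b_\eta\not\subseteq\dot U$ for all $\eta$ (the enumeration of branches in item~1 is the case where $\dot U$ is a single cofinal branch, and ``no Aronszajn subtree'' is then immediate, since an Aronszajn subtree would be uncountable, downward closed, and contain no cofinal branch). For each $\eta\in W$ the set $D_\eta=\{r\le p: r \text{ decides a level } \beta \text{ with } b_\eta(\beta)\notin\dot U\}$ is dense below $p$ and lies in $M$, so by (ii) $q$ extends a member of each $D_\eta$; downward closure then gives $q\Vdash b_\eta(\delta)\notin\dot U$ for every $\eta\in W$, which by (i) means $q\Vdash T_\delta\cap\dot U=\emptyset$. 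But $\dot U$ is forced unbounded in height (an uncountable downward closed set cannot sit in the countable union $T_{<\gamma}$), so it has a node above level $\delta$ and hence, being downward closed, a node on level $\delta$: contradiction. The same bookkeeping yields item~3. Given a name $\dot F$ for a function on finite subsets of $\mathcal{B}(T)$, put it in $M$, form $q$ as above, and set $Z=\{b_\xi:\xi\in W\}$; genericity makes $Z$ closed under $\dot F$, and one checks $\alpha_Z=\delta$ (each $b_\xi\Delta b_{\xi'}$ with $\xi,\xi'\in W$ is definable in $M$ hence below $\delta$, while the $\Qbb$-indexed successors force splittings cofinal in $\delta$), so by (i) every $t\in T_{\alpha_Z}=T_\delta$ equals $b_\xi(\delta)$ for some $\xi\in W$ and is thereby captured; thus $q\Vdash Z\in\Omega(T)$ and $\Omega(L)=\Omega(T)$ is stationary.

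The step I expect to be the main obstacle is making the genericity argument of the second item actually close, i.e.\ ruling out a ``diagonal'' branch (or Aronszajn subtree) that shadows a generic branch up to each good level and then escapes. Its resolution is the observation that the node through which $\dot U$ must pass at level $\delta$ is $b_\xi(\delta)$ with index $\xi$ forced into $W=M\cap\omega_2$, which is exactly what lets the dense sets $D_\eta\in M$ bind that very $\eta$ and contradict what was secured up to level $\delta$. Everything therefore rests on the clause $\mathrm{ran}(b_q)=(T_q)_{\alpha_q}$ of the preceding lemma, so the care needed is in invoking that lemma with $M$ containing $C$, $p$, and $\dot U$ (respectively $\dot F$), and in transferring ``uncountable downward closed and unbounded'' correctly through the generic filter.
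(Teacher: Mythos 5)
Your proof is correct and takes essentially the same approach as the paper: the paper's own proof just cites the preceding lemma (items 1--2 ``follow vacuously'' from the existence of a lower bound $q$ with $\operatorname{dom}(b_q)=M\cap\omega_2$, $\alpha_q=M\cap\omega_1$, $\mathrm{ran}(b_q)=(T_q)_{\alpha_q}$) and, for item 3, observes that this lower bound forces $M[\dot G]\cap\mathcal{B}(L)\in\Omega(L)$, so you have simply supplied the density/genericity bookkeeping that the paper leaves implicit. The only point worth noting is that, exactly like the paper, you treat Aronszajn subtrees as downward closed, which is the convention under which the ``Moreover'' clause subsumes the no-Aronszajn-subtree claim.
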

\begin{proof}
The first statement follows from the second one.
The second statement follows from the third one and Proposition 2.9, or the stronger statement Theorem 4.1 of 
\cite{no_real_Aronszajn}.
For the last statement, let $M$ be suitable
for $\mathcal{H}$ and $p\in M\cap \mathcal{H}$. Then the $(M,\mathcal{H})$-generic condition from the last
lemma forces that $M[\dot{G}]\cap  L \in \Omega(L)$. 
\end{proof}

From now on, $T$ is the generic Kurepa tree generated by $\mathcal{H}$ unless otherwise mentioned.
Also $K$ is the linear order $\mathcal{B}(T)$ ordered by the lexicographic order 
of the tree $T$. We fix an enumeration of 
$\mathcal{B}(T)=\langle b_\xi : \xi \in \omega_2 \rangle$.

The rest of this section is devoted to showing that $K$ has a lot of non-$\sigma$-scattered suborders
that are amenable. These facts are not used in the proof of the results in the next sections but show 
some possible obstruction for the minimality of suborders of $K$. In the next sections, these 
non-$\sigma$-scattered suborders are forced to be $\sigma$-scattered by an improper forcing.
Here we say a countable sequence of conditions in $\mathcal{H}$ forces a statement
 if every lower bound of that sequence 
forces that statement, equivalently all generic filter that contain the sequence extends the model to the 
one in which the statement holds.

\begin{defn}
Let $T$ be the $\mathcal{H}$-generic tree and $t\in T$. The element $t$ is said to be \emph{simple} if whenever 
$\theta> 2^{\omega_1}$ is a regular cardinal and 
$M$ is a countable elementary submodel of $H_\theta$ containing $T$, then
$M$ captures $t \in T$. Otherwise $t$ is said to be \emph{complex}.
\end{defn} 

\begin{lem}
Assume $\GCH$ holds in $\mathbf{V}$, $M$ is suitable for $\mathcal{H}$, $\langle p_n: n\in \omega \rangle $
is an $(M, \mathcal{H})$-generic sequence, $t \in T_0:=\bigcup_{n \in \omega} T_{p_n}$,
$\langle p_n \rangle_{n \in \omega} \Vdash "t$ is simple", $b$ is a branch in $T_0$
and $\textrm{ht}(t)<\alpha< \delta:=M\cap \omega_1$. Then there exists $s \in T_0$ such that $\textrm{ht}(s)=\alpha$,
$t<s$, $s \notin b$ and $\langle p_n \rangle_{n \in \omega} \Vdash "s$ is simple".
\end{lem}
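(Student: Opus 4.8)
The plan is to realize the witness $s$ as a node lying on one of the \emph{generic} branches through $t$, chosen so as to peel off the branch $b$ at the first opportunity. First I would unwind what $\langle p_n \rangle_{n\in\omega} \Vdash$ ``$t$ is simple'' says concretely below $\delta = M\cap\omega_1$. Passing to the lower bound $q$ of the previous lemma, so that $\alpha_q = \delta$, $\dom(b_q)=M\cap\omega_2$ and $\range(b_q)=(T_q)_\delta$, one checks: for a limit ordinal $\delta'\le \Ht(t)$ and a suitable $M'\ni T$ with $M'\cap\omega_1=\delta'$, the model $M'$ captures the node $t\!\restriction\!\delta'$ (which sits at level $\delta'$) exactly when some named branch $b_\xi$ with $\xi\in M'\cap\omega_2$ passes through it; for models with $M'\cap\omega_1>\Ht(t)$ capture is automatic, since then $t\in M'$. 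Thus ``$t$ is simple'' unwinds to the reflection statement that for every limit $\delta'\le\Ht(t)$ and every suitable $M'$ with $M'\cap\omega_1=\delta'$, the initial segment $t\!\restriction\!\delta'$ lies on a branch named in $M'$.

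With this reformulation I would build $s$ as $b_\xi(\alpha)$ for a carefully chosen $\xi\in M\cap\omega_2$. Since $\Ht(t)<\alpha<\delta$, every node of height $\delta$ above $t$ equals $b_\xi(\delta)$ for some $\xi\in M\cap\omega_2$ by $\range(b_q)=(T_q)_\delta$, and the level-$\alpha$ node $b_\xi(\alpha)$ of such a branch lies in $T_0$. Because each node of $T_0$ splits into $\mathbb{Q}$-many immediate successors, the cone above $t$ meets level $\alpha$ in infinitely many nodes, each carrying at least one such named branch. As $b$ passes through at most one node of level $\alpha$, I may choose $\xi$ with $t\in b_\xi$ and $b_\xi(\alpha)\ne b(\alpha)$, and set $s=b_\xi(\alpha)$; then $t<s$, $\Ht(s)=\alpha$ and $s\notin b$ are immediate.

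It remains to see that $\langle p_n\rangle\Vdash$ ``$s$ is simple,'' i.e.\ the reflection statement for $s$. For limit levels $\delta'\le\Ht(t)$ this is nothing but the simplicity of $t$, since $s\!\restriction\!\delta'=t\!\restriction\!\delta'$. For limit levels $\Ht(t)<\delta'\le\alpha$ we have $s\!\restriction\!\delta'=b_\xi(\delta')$, and the point is that the \emph{trace} of a generic branch is captured by \emph{every} suitable model, not merely by $M$. This is where I would exploit the homogeneity built into $\mathcal{H}$ --- the coherent, symmetric, composition- and club-respecting isomorphisms $\pi^q_{t',s'}$ of conditions $3$--$7$. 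Clause $6$ forces the induced relabeling $\xi'\mapsto b_q^{-1}(\pi^q_{t',s'}(b_q(\xi')))$ of branch indices to preserve the initial segments cut out by the club $C$, and the reflection property defining simplicity is invariant under such $C$-preserving relabelings. Consequently the simplicity of a node on a generic branch is a \emph{homogeneous} feature of its height and its $C$-position, which the genericity of $\langle p_n\rangle$ --- meeting the dense sets that decide it, together with the coverage clause $\range(b_q)=(T_q)_\delta$ --- forces to hold at every limit level above $\Ht(t)$.

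The main obstacle is precisely this last step: transferring the capture guarantee from the single model $M$ to \emph{all} suitable $M'$ with $M'\cap\omega_1=\delta'$ for $\delta'\in(\Ht(t),\alpha]$, while keeping the construction on a branch that avoids $b$. Concretely, one must verify that the homogeneity isomorphisms genuinely carry the ``seen by every model'' property of $t$'s initial segments upward along the chosen generic branch, and that the finitely many constraints of dodging $b$ do not collide with the infinitely many reflection requirements --- the abundance of $\mathbb{Q}$-splitting and of named branches above $t$ is what leaves room to meet both. I expect establishing the $C$-invariance of the simplicity predicate under the $\pi$-maps, and hence the upward propagation of simplicity, to be the technically delicate heart of the argument.
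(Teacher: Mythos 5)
Your choice of $s$ and your treatment of the small models are in line with the paper (for $N\cap\omega_1\le ht(t)$ any branch of $N$ witnessing capture of $t$ also witnesses capture of $s$, and for $N\cap\omega_1>\alpha$ the node $s$ lies in $N$ outright), but the step you yourself flag as the ``delicate heart'' is a genuine gap, and the mechanism you propose to fill it cannot work. Your central claim --- that the trace of a generic branch is captured by \emph{every} suitable model --- is false. Capture of $s\restriction (N\cap\omega_1)$ by a model $N$ with $ht(t)<N\cap\omega_1\le\alpha$ requires a branch that is an \emph{element of} $N$ and agrees with $b_\xi$ up to $N\cap\omega_1$; but neither $\xi$, nor $b_\xi$, nor the node $s\restriction(N\cap\omega_1)$, nor the relevant maps from $\Pi$ need belong to $N$, so homogeneity gives no purchase: the $\pi$-maps transfer capture only between configurations whose data lie inside $N$, and clause 6 constrains how the maps permute branch indices, not which branches a given countable elementary submodel contains. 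Indeed the principle can be refuted outright: in any condition $b_q$ is a bijection onto the top level and conditions do not branch at limit heights, so given any countable $M''\prec H_\theta$ with $\mathcal{H}\in M''$ and $\xi\notin M''$, a lower bound of an $(M'',\mathcal{H})$-generic sequence can route $b_\xi$ along a ``diagonal'' cofinal chain of $\bigcup_n T_{p''_n}$; such a condition forces every branch indexed in $M''\cap\omega_2$ to diverge from $b_\xi$ strictly below $M''\cap\omega_1$, so $M''[\dot G]$ fails to capture $b_\xi(M''\cap\omega_1)$, i.e.\ that node is complex. (Note also that if your principle were true then, since by homogeneity and genericity every node of $T$ lies on some generic branch, every node would be simple, and the set of minimal complex elements exploited in the proposition this lemma serves would be empty.) Nothing in your argument rules out that the given $M$-generic sequence has already committed to such a configuration below level $\alpha$ for every admissible choice of $\xi$, so the statement ``$\langle p_n\rangle_{n\in\omega}\Vdash s$ is simple'' is simply not established.

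The paper closes exactly this gap with a different idea, which is the real content of the proof: definability in place of genericity. It fixes an $\mathcal{H}$-name $\dot\lhd\in M$ for a well-ordering of $H_{\omega_3}^{\mathbf{V}[G]}$, lets $\sigma$ be the name for the $\lhd$-least branch through $t$ whose node at level $ht(t)+1$ differs from $b(ht(t)+1)$, and sets $s$ to be the node of $T_0$ forced to equal $\sigma(\alpha)$ (well defined because the $M$-generic sequence decides both the tree below $\delta$ and $\dot\lhd$ on $M$-names). Then for any suitable $N$ in $\mathbf{V}[G]$ with $N\cap\omega_1>ht(t)$ one has $t^+\subset N$, so $\sigma_G=\min_{\lhd}\{c\in\mathcal{B}(T): c(ht(t)+1)=s(ht(t)+1)\}$ lies in $N$ by elementarity, and this single canonical branch, which contains $s$ by construction, witnesses capture in every such $N$ simultaneously. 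This is precisely what a generic branch $b_\xi$ can never do, since it is not definable from parameters available in an arbitrary suitable $N$. So the missing idea is not an invariance property of the $\pi$-maps but the placement of $s$ on a \emph{canonically definable} branch.
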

\begin{proof}
First note that if $G$ is $\mathcal{H}$-generic over $\mathbf{V}$ then
$H_{\omega_3}[G]=H^{\mathbf{V}[G]}_{\omega_3}$
has a well ordering $\lhd$. Let $\dot{\lhd}$ be an $\mathcal{H}$-name for $\lhd$.
Since $\langle p_n \rangle_{n \in \omega}$ is $M$-generic it decides 
$\dot{\lhd} \cap (M[\dot{G}])^2$, in the sense that, if $\tau$ and $\pi$ are two 
$\mathcal{H}$-names that are in $M$ then there is an $n \in \omega$ such that
$p_n \Vdash ``\tau \dot{\lhd} \pi"$ or $p_n \Vdash "\pi \dot{\lhd} \tau"$.

Also note that if $t$ is simple then so is every $t'\in t^+.$ Now let $\sigma \in M$
be an $\mathcal{H}$-name for a branch of the $\mathcal{H}$-generic tree such that
$\langle p_n \rangle_{n \in \omega}$ forces that
\begin{itemize}
\item  $t \in \sigma$,
\item $\sigma(\textrm{ht}(t)+1)\neq b(\textrm{ht}(t)+1)$, and 
\item $\sigma$ is the $\dot\lhd$-minimum branch of $\dot{T}$ with the properties above.
\end{itemize}
Let $s \in T_0$ such that $\langle p_n \rangle_{n \in \omega}$ forces that $s=\sigma(\alpha)$. We will show 
that $\langle p_n \rangle_{n \in \omega} \Vdash " s$ is simple". Let $G$ be an $\mathcal{H}$-generic filter
containing $\langle p_n \rangle_{n \in \omega}$ and in $\mathbf{V}[G]$, $N$ be a countable elementary 
submodel of $H_{\omega_3}$. If $N\cap \omega_1 \leq \textrm{ht}(t)$, by simplicity of $t$, $N$ captures $s.$
if $\textrm{ht}(t)< N\cap \omega_1$ then $t^+ \subset N$ so $\sigma_G = 
\textrm{min}_{\lhd}\{ c \in \mathcal{B}(T): c(\textrm{ht}(t)+1)=s(\textrm{ht}(t)+1) \}.$ So by elementarity 
$\sigma_G \in N$ and $N$ captures $s.$ 
\end{proof}

\begin{prop} \label{small big suborders}
Assume $\GCH$ holds in $\mathbf{V}$ and $G$ is $\mathbf{V}$-generic for $\mathcal{H}$.
Then $K$ has an amenable non-$\sigma$-scattered suborder in $\mathbf{V}[G]$.
\end{prop}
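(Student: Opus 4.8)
The plan is to realize inside $K$ an isomorphic copy of a \emph{Baumgartner type} whose points are branches of $T$ running entirely through simple nodes. Recall the template: fix a stationary set $S \subseteq \omega_1$ of limit ordinals together with cofinal $\omega$-sequences $\langle C_\alpha : \alpha \in S \rangle$, and let $B = \{ C_\alpha : \alpha \in S \}$ carry the lexicographic order. As reviewed in Section \ref{back}, $B$ is non $\sigma$-scattered, since a countable elementary submodel $M$ fails to capture the point $C_{M \cap \omega_1}$ exactly when $M \cap \omega_1 \in S$, so that $\Gamma(B)$ is stationary and Proposition \ref{char} applies; moreover $B$ is amenable, this being precisely the feature that makes Baumgartner's examples non minimal. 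Because amenability and $\sigma$-scatteredness depend only on the isomorphism type of a linear order (both are defined through $\hat{L}$, capturing, and $\Gamma$), it suffices to produce an order embedding of some such $B$ into $K$; its range will then be the required amenable non $\sigma$-scattered suborder.

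To build the embedding I work in $\mathbf{V}[G]$ and feed on the preceding lemma, which shows that the simple nodes are \emph{dense with splitting}: above any simple node, below any prescribed height, and avoiding any single prescribed branch, there is a simple node (letting $M \cap \omega_1$ range high this holds at every height below $\omega_1$). Starting from the root, I recursively assign to each $\alpha \in S$ a branch $x_\alpha \in \mathcal{B}(T)$ by climbing along the heights listed in $C_\alpha$ and, at each step, choosing a simple successor that diverges from the branches already committed to exactly where the sequences $C_\beta$ diverge from $C_\alpha$. The $\mathbb{Q}$-branching demanded by clause (1) in the definition of $\mathcal{H}$ always leaves room to place these divergences in the correct lexicographic position, while the avoidance clause of the lemma separates $x_\alpha$ from any single earlier branch without leaving the simple nodes. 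Arranging this so that $\alpha \mapsto x_\alpha$ is order preserving yields the embedding, and $L_0 := \{ x_\alpha : \alpha \in S \}$ is order isomorphic to $B$.

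Amenability of $L_0$ can also be checked directly: since every node of every $x_\alpha$ is simple it is captured by every countable elementary submodel, and the equivalence between tree capturing and linear-order capturing discussed in Section \ref{back} turns this into internality of $x_\alpha$ in $L_0$. I expect the genuine obstacle to be reconciling the two halves of the argument, since simplicity forces \emph{every} model to capture \emph{every} node (which we want, for amenability), whereas non $\sigma$-scatteredness requires that stationarily many models fail to capture some point of $L_0$. The resolution is that a model $M$ with $M \cap \omega_1 \in S$ captures the node $x_{M \cap \omega_1} \upharpoonright (M \cap \omega_1)$ only through a branch of $T$ that need not lie in the family $L_0 \cap M$, so $M$ fails to capture $x_{M \cap \omega_1}$ \emph{within} $L_0$; threading this needle is exactly the purpose of the simultaneous avoidance-and-simplicity conclusion of the preceding lemma, and it is the step I would treat most carefully.
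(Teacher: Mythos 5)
There is a genuine, fatal gap, and it is exactly at the point you flagged as the step to "treat most carefully." The two requirements you impose on $L_0$ --- that every node of every $x_\alpha$ be simple, and that $L_0$ be order isomorphic to a Baumgartner type --- are incompatible, so the claimed embedding cannot exist. Indeed, suppose every node of every $x \in L_0$ is simple, and let $N \prec H_\theta$ be any suitable countable model with $T, K, L_0 \in N$ and $x \in L_0 \smallsetminus N$. The node $t$ of $x$ at height $N \cap \omega_1$ is simple, so $N$ captures $t \in T$: there is a branch $b \in N \cap \mathcal{B}(T)$ with $t \Delta b \geq N \cap \omega_1$, i.e.\ $b$ agrees with $x$ below $N \cap \omega_1$. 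Any $c \in N \cap K$ distinct from $b$ satisfies $c \Delta b \in N$, hence $c \Delta b < N \cap \omega_1$, so $c$ splits from $b$ and from $x$ at the same node in the same lexicographic direction; therefore no element of $N \cap K$ lies strictly between $x$ and $b$, and $N$ captures $x$ as an element of $K$. By the Fact in Section \ref{back} on capturing in suborders, $N$ then captures $x$ as an element of $L_0$ as well. Thus \emph{every} suitable model captures \emph{every} element of $L_0$, so $\Omega(L_0)$ contains a club, $\Gamma(L_0)$ is non-stationary, and Proposition \ref{char} makes $L_0$ $\sigma$-scattered --- the opposite of what you need, and a contradiction with $L_0 \cong B$. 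Your proposed resolution in the last paragraph is precisely the error: capturing within $L_0$ does not require the capturing witness to be a point of $L_0 \cap M$; the Dedekind cut of $L_0$ induced by the branch $b \in M$ is an element of $M \cap \hat{L}_0$ and serves as the witness. You invoke the equivalence of tree capturing and linear-order capturing to get amenability, but you must deny that same equivalence (in the form of the suborder Fact) to keep non $\sigma$-scatteredness, and the paper's Fact says you cannot.

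The paper's proof is organized to avoid exactly this trap: it takes $L$ to be the set of \emph{minimal complex} nodes (an antichain), and then picks one branch through each. Simplicity of all \emph{proper predecessors} is what yields amenability --- a model $M$'s club members $Z$ only "see" up to their own level $\alpha_Z$, where the relevant node is still simple --- while \emph{complexity at the node itself} is what makes $\Gamma(L)$ stationary, and this is established not inside $\mathbf{V}[G]$ but by a density argument over $\mathcal{H}$: given a name for a club and an $(M,\mathcal{H})$-generic sequence, one builds by hand a lower bound whose top level $T_\delta$ contains an upper bound for a chain of simple nodes $t_k$ diagonalizing against all branches of $M$ ($t_k \notin b_k$); that upper bound is then a minimal complex node not captured by $M[\dot{G}]$. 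This also exposes the concrete place your recursion breaks down: an $\omega$-chain of simple nodes in $T$ need not have any upper bound at all (levels of $T$ are countable), and the chains that the paper arranges to be bounded become \emph{complex} at their supremum precisely because they diagonalize. Any suborder of $K$ that is genuinely non $\sigma$-scattered must, at stationarily many levels, pass through complex nodes; so the simple-node lemma can only ever be scaffolding inside a forcing argument, not the skeleton of the final linear order.
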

\begin{proof}
Let $L=\{t \in T : t$ is minimal complex$\}$ ordered by the lexicographic order of the $\mathcal{H}$-generic
tree $T$. To see $L$ is amenable, let $M$ be  a  countable elementary submodel of $H_\theta$ 
with $T,L \in M$, where $\theta$ is a regular large enough cardinal and $t\in L$. 
We need to show that $t$ is internal to $M$. 
If $\textrm{ht}(t) < M\cap \omega_1$ then $t \in M$ and there is nothing to prove.
If $\textrm{ht}(t)> M\cap \omega_1$, note that $t \upharpoonright (M \cap \omega_1)$ is simple and $M$ captures it. 
If $\textrm{ht}(t) = M\cap \omega_1$,
let $E=\{N\cap \mathcal{B}(T):N$ is a countable elementary submodel of $H_{\omega_3}$ with $T,L \in N \}$.
For every $Z \in E\cap M$ there is a countable elementary submodel $N$ of 
$H_{\omega_3}$ such that $N\in M$ and $N\cap \mathcal{B}(T)=Z$. 
In particular $N \cap \omega_1 < \textrm{ht}(t)$, and 
since $ t$ is a minimal complex element of $T$,
$Z$ captures $t$. So $t$ is internal to $M$ and $L$ is amenable.

In order to see $L$ is not $\sigma$-scattered we will show that $\Gamma(L)$ is stationary 
in $[\hat{L}]^\omega$. Assume $\dot{E}$ is an $\mathcal{H}$-name  for a club in $[\hat{L}]^\omega$ 
and $q \in \mathcal{H}$.
In $\mathbf{V}$, 
let $M$ be suitable for $\mathcal{H}$ with $q , \dot{E}$ in $M$ and 
$\langle p_n: n \in \omega \rangle$ be an $M$-generic sequence such that $p_0=q.$
Also let $\langle b_n : n \in \omega \rangle$ be an enumeration of all branches of 
$T_0=\bigcup_{n \in \omega}T_{p_n}$ which are downward closure of $\{b_{p_n}(\xi): n \in \omega \}$ for some $\xi \in M\cap \omega_2$.
By the previous lemma there is a sequence 
$\langle t_k: k \in \omega \rangle$ of elements in $T_0$ such that 
for all $k \in \omega$, $\langle p_n \rangle_{n \in \omega}$ forces that $t_k$ is simple, $t_k < t_{k+1}$,
$t_k \notin b_k$ and $\sup\{\textrm{ht}(t_k): k \in \omega \}= \delta:=M\cap \omega_1$.

Now let $T_p=T_0 \cup (T_p)_\delta$ where $ (T_p)_\delta$ is a minimal set such that
\begin{itemize}
\item 
for each $\xi \in M\cap \omega_2$, $\{b_{p_n}(\xi) : n \in \omega \}$ has a unique upper bound in $(T_p)_\delta$,
\item
the sequence $\langle t_k: k \in \omega \rangle$ has a unique upper bound for in $(T_p)_\delta$, and 
\item
for each $u,v \in T_0$ and $t \in (T_p)_\delta$ with $u < t$, $(\pi^{p_n}_{u,v})[\{s\in T_0:s<t\}]$ has a unique upper bound 
in $(T_p)_\delta$.
\end{itemize}
It is easy to see that there is a 
$b_p$ which is a  function  from a countable subset of $\omega_2$ to $(T_p)_\delta$ and 
$\Pi_p$ consisting of natural extensions of the maps $\pi^{p_n}_{u,v}$ where $u,v$ are in $T_0$, such that
$p=(T_p,b_p,\Pi_p)$ is a lower bound for $p$.

On the other hand $p$ forces the following statement.
\begin{itemize}
\item
There are minimal complex elements at the $\delta$th level of the $\mathcal{H}$-generic tree $T$.
\item
$M[\dot{G}]\cap \tau \in \dot{E}$, where $\tau$ is an $\mathcal{H}$-name for $\hat{L}$ in $M$.
\item
$M[\dot{G}]\cap \tau$ does not capture all elements of $\dot{L}$.
\end{itemize}
Therefore $\mathbb{1}_\mathcal{H}\Vdash ``\dot{L} $ is not $\sigma$-scattered." Note that the elements of $L$
form an antichain in $T$. Let $L'\subset K$ such that for every $t \in L$ there is a unique branch  
$b \in L'$ with $t \in b$. Then $L'$ is isomorphic to $L$, hence $K$ has an amenable non-$\sigma$-scattered
suborder.
\end{proof}

\section{Adding embeddings} \label{embeddings}

In the previous section we introduced a forcing which generates a Kurepa tree $T$ equipped with 
a lexicographic order which also has some homogeneity properties.
In this section we use the homogeneity of $T$ to prove the countable support iteration of
some forcings that add embeddings among the $\aleph_1$-sized
dense subsets of the linear order $K=(\mathcal{B}(T),\lex)$ do not collapse cardinals.
We fix an enumeration $\langle b_\xi : \xi \in \omega_2 \rangle$
of the branches of the tree $T$ for the rest of the paper, and recall that for each $t\in T$, the set $t^+$, consisting of all 
immediate successors of $t$ with respect to $<_T$, is isomorphic to the rationals when considered with $\lex$. 
Here homogeneity of $T$
means that there is a collection $\Pi = \langle \pi_{t,s}: t,s \in T \wedge \textrm{ht}(t)=\textrm{ht}(s) \rangle$ with the following 
properties.

\begin{enumerate}
\item for all $t,s$ in $T$ which have the same height,
$\pi_{t,s}$ is a tree and lex order isomorphism from  $T(t)$ to $T(s)$.
\item $\Pi$ is symmetric, in the sense that $\pi_{t,s} = (\pi_{s,t})^{-1}$.
\item $\Pi$ is coherent in the sense that if $t,s,t',s'$ are in $T$, $\textrm{ht}(t)=\textrm{ht}(s)$, $t<t'$,
$s<s'$ and $\pi_{t,s}(t')=s'$, then $\pi_{t,s}\upharpoonright(T(t')) = \pi_{t',s'} $, where $t'\uparrow = \{u \in T : t'$ is 
compatible with $ u \}$. 
\end{enumerate}

\begin{defn} \label{embedding forcing}
Assume $T$ is as above and $X,Y$ are two   subsets of $\omega_2$
such that $|X|=|Y|=\aleph_1$ and both $\{ b_\xi : \xi \in X \}$ , $\{ b_\xi : \xi \in Y \}$ are dense in $K$. 
$\mathcal{F}_{XY}(=\mathcal{F})$ is
the poset consisting of all conditions $p=(f_p,\phi_p)$ for which the following holds:
\begin{enumerate}
\item $f_p: T\upharpoonright A_p \longrightarrow T\upharpoonright A_p$ is a lex order and level preserving 
tree isomorphism 
where $A_p \subset \omega_1$ is countable and closed with $\textrm{max}A_p=\alpha_p$.
\item $\phi_p$ is a countable partial injection from $\omega_2$ to $\omega_2$ such that:
\begin{enumerate}
\item for all $\xi \in \dom(\phi_p)$,  if $\xi \in X$ then $ \phi_p(\xi) \in Y$,
\item \label{2b}
 for all $\xi \in \dom(\phi_p)\setminus X$, $b_{\phi_p(\xi)}=\pi_{t,s} [b_\xi]$,
where $t = b_\xi (\alpha_p +1)$  and $s$ is an immediate successor of $f_p(b_\xi(\alpha_p))$, and 
\item the map $b_\xi \mapsto b_{\phi_p(\xi)}$ is lexicographic order preserving.
\end{enumerate}
\item For all $t \in T_{\alpha_p}$ there are at most finitely many $\xi \in \dom(\phi_p)$ with $t \in b_\xi$.
\item For all $\xi \in \dom(\phi_p)$, $f_p(b_\xi(\alpha_p))=b_{\phi_p(\xi)}(\alpha_p)$.
\end{enumerate}
We let $q\leq p$ if $f_p \subset f_q$, $A_q \cap \alpha_p = A_p$ and $\phi_p \subset \phi_q$.
\end{defn}

It is obvious that the sets 
$\{q \in \mathcal{F}: \alpha_q > \beta \}$ 
and $\{q \in \mathcal{F}: \xi \in \dom(\phi_q) \}$ are dense for all 
$\beta \in \omega_1$ and $\xi \in \omega_2$. Therefore the forcing $\mathcal{F}$ 
adds a  lexicographic order 
embedding from $X$ to $Y$ via the map $\Phi \upharpoonright X$ where 
$\Phi = \bigcup_{p \in G} \phi_p$ and $G$ is 
the generic filter for $\mathcal{F}$. 
We will show that countable support iterations of these forcings do not collapse cardinals. 
\begin{lem} \label{robust forcing}
Assume $\mathcal{P}$ is an $S$-complete forcing where $S=\Omega(T)$, and $\dot{X}, \dot{Y}$ are $
\mathcal{P}$-names 
for the indexes of the elements of $\aleph_1$-sized dense subsets of $K$. Then
\begin{enumerate}
\item$\Vdash ``\dot{\mathcal{F}_{XY}}$ is $\check{S}$-complete", and 
\item$\Vdash ``\dot{\mathcal{F}_{XY}}$ has the $\check{S}$-cic for $\check{\omega}_2$"
\end{enumerate}
\end{lem}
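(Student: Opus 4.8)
The plan is to reduce both clauses to statements about the poset $\mathcal{F}_{XY}$ \emph{inside} $\mathbf{V}^{\mathcal{P}}$ and to check that the arguments use nothing about $X,Y$ beyond the density of $\langle b_\xi:\xi\in X\rangle$ and $\langle b_\xi:\xi\in Y\rangle$ in $K$. Since $\mathcal{P}$ is $S$-complete, by the cited fact it adds no new countable sequences of ordinals and preserves $\omega_1$; hence $T\upharpoonright\omega_1$, all its countable levels, and the homogeneity system $\Pi=\langle\pi_{t,s}\rangle$ are computed in $\mathbf{V}^{\mathcal{P}}$ exactly as in $\mathbf{V}$, no new node appears at any level $<\omega_1$, and (together with Fact \ref{No A subtree}) $\Omega(T)$ remains a stationary subset of $[\mathcal{B}(T)]^{\omega}$. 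Thus $S=\Omega(T)$ is still the correct parameter in the extension, and a condition of $\mathcal{F}_{XY}$ is the same countable kind of object it was in the ground model. I would record these absoluteness and preservation remarks first; they are what turn a single internal verification into a \emph{forced} statement, uniformly over all names $\dot X,\dot Y$ for dense index sets, and this is precisely the point of phrasing the lemma as ``$\Vdash_{\mathcal{P}}\ldots$'' rather than proving it for fixed ground-model $X,Y$.

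For clause (1) I would rerun the proof of Lemma \ref{ctbly closed}, carried out in $\mathbf{V}^{\mathcal{P}}$, with the extra bookkeeping demanded by Definition \ref{embedding forcing}. Fix a suitable $M$ with $\delta:=M\cap\omega_1$ and $M\cap\mathcal{B}(T)\in\Omega(T)$, and a descending $(M,\mathcal{F})$-generic sequence $\langle p_n:n\in\omega\rangle$. Put $A=\bigcup_n A_{p_n}$ (cofinal in $\delta$), $f=\bigcup_n f_{p_n}$, and $\Phi=\bigcup_n\phi_{p_n}$, where $\dom\Phi=M\cap\omega_2$ by genericity through the dense sets $\{q:\xi\in\dom\phi_q\}$. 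Extend $f$ across the top level by sending the limit of a branch $c$ through $A$ to the limit of $f[c]$; because $T$ does not branch at limits this is a well-defined lex- and tree-isomorphism $\bar f$ of $T\upharpoonright(A\cup\{\delta\})$, and clause 4 forces $\bar f(b_\xi(\delta))=b_{\Phi(\xi)}(\delta)$ since $f(b_\xi(\beta))=b_{\Phi(\xi)}(\beta)$ for cofinally many $\beta\in A$. The only clause not inherited verbatim is clause 3: here one uses that for the models under consideration any two distinct branches in $M\cap\mathcal{B}(T)$ split strictly below $\delta$, so the nodes $\{b_\xi(\delta):\xi\in\dom\Phi\}$ are pairwise distinct and each node of $T_\delta$ carries at most one indexed branch. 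Clauses 2.a--2.c survive the limit because the $\pi_{t,s}$ are fixed, coherent maps inherited from $\mathbf{V}$; hence $q=(\bar f,\Phi)$ is the required lower bound.

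For clause (2) I would take $M,N,h,\alpha_M,\alpha_N$ as in Definition \ref{S-cic} with $\kappa=\omega_2$, and an $(M,\mathcal{F})$-generic $\langle p_n\rangle$. The first step is to observe that $M\cap\alpha_M=N\cap\alpha_N$ forces $M\cap\omega_1=N\cap\omega_1=:\delta$ and that every node of $T\upharpoonright\delta$ lies in $M\cap N$ and is therefore fixed by $h$; consequently $h(f_{p_n})=f_{p_n}$, the two families of $f$-parts \emph{coincide}, and the single extension $\bar f$ of clause (1) simultaneously bounds $\langle f_{p_n}\rangle$ and $\langle h(f_{p_n})\rangle$. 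This reduces clause (2) to amalgamating the pairing parts $\Phi_M=\bigcup_n\phi_{p_n}$ and $\Phi_N=h[\Phi_M]$, which is the substantive step. The separation hypothesis $\sup(M\cap\omega_2)<\alpha_N$ with $M\cap\alpha_M=N\cap\alpha_N$ guarantees that the indices of $\Phi_N$ lying off the common part $M\cap N$ sit strictly above every index used by $\Phi_M$, in both domain and range, so $\phi_q:=\Phi_M\cup\Phi_N$ is a well-defined injection; clause 3 at level $\delta$ holds because the splitting argument of clause (1) applies on the $M$-side and, since $N\cap\mathcal{B}(T)\in\Omega(T)$, on the $N$-side, giving at most one branch from each side through a given node of $T_\delta$, hence at most two. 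Clauses 4 and 2.c for the moved indices are then obtained by applying $h$ to the corresponding $M$-side identities, using that $\bar f$ is built from $h$-fixed data and so $h(\bar f)=\bar f$. The step I expect to be the main obstacle is exactly this amalgamation of the $\phi$-parts: one must rule out that an $M$-branch and a moved $N$-branch sharing a node at level $\delta$ impose incompatible values of $\bar f$ or violate clause 2.b, and this is where the homogeneity of $T$ together with the index separation is needed, to see that the moved $N$-branches are genuinely independent of the $M$-side below $\delta$. Once $q=(\bar f,\phi_q)$ is checked against all seven clauses of Definition \ref{embedding forcing}, it is the desired common lower bound; and since no step used anything about $X,Y$ beyond density in $K$, both properties are forced for arbitrary $\dot X,\dot Y$.
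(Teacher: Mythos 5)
Your treatment of clause (1) follows the paper's proof: take unions, extend $f$ across level $\delta$ along branches, and use clause 4 of Definition \ref{embedding forcing} to identify $\bar f(b_\xi(\delta))=b_{\Phi(\xi)}(\delta)$. One gloss: you state $M\cap\mathcal{B}(T)\in\Omega(T)$ but never use it where it is actually needed, namely to see that \emph{every} node of $T_\delta$ is of the form $b_\xi(\delta)$ for some $\xi\in M\cap\omega_2$; this is what guarantees that the image chain $f[c]$ has a top node at level $\delta$ at all, and that $\bar f$ is total and surjective on $T_\delta$ (the paper's ``$\{b_\xi(\delta):\xi\in M\cap\omega_2\}=T_\delta$''). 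Your splitting argument for clause 3 is fine and is only elementarity.

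In clause (2), however, there is a genuine gap, and your guiding picture is backwards. First, the separation hypothesis $\sup(M\cap\omega_2)<\alpha_N$ with $M\cap\alpha_M=N\cap\alpha_N$ does \emph{not} make $\phi_q=\Phi_M\cup h[\Phi_M]$ a well-defined injection: it locates the elements of $N\smallsetminus M$, but says nothing about single pairs of $\Phi_M$ that straddle $\alpha_M$. If $(\xi,\zeta)\in\phi_n$ with $\xi\in M\cap\alpha_M$ but $\zeta\in M\smallsetminus\alpha_M$, then $h(\xi)=\xi$ while $h(\zeta)\neq\zeta$, so the union contains $(\xi,\zeta)$ and $(\xi,h(\zeta))$ and is not even a function; dually, $\xi\in M\smallsetminus\alpha_M$ and $\zeta\in M\cap\alpha_M$ produce $(\xi,\zeta)$ and $(h(\xi),\zeta)$, destroying injectivity and the strictness of clause 2.c. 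Excluding such straddling pairs is precisely the substantive step; the paper's proof appeals here to clause 2.b of Definition \ref{embedding forcing} together with the fact that the maps $\pi_{t,s}$ of $\Pi$ preserve the lexicographic order (with, behind it, the club-respecting clause 6 in the definition of $\mathcal{H}$, which correlates the index of a branch with the index of its $\pi$-image) --- that is, injectivity comes from the structure tying $\phi$ to the homogeneity system, not from index separation, and your argument contains nothing playing this role. Second, the moved branches are not ``genuinely independent of the $M$-side below $\delta$'': since $T\upharpoonright\delta\subset M\cap N$ is fixed pointwise by $h$, one has $b_{h(\xi)}(\beta)=b_\xi(\beta)$ for all $\beta<\delta$, hence $b_{h(\xi)}(\delta)=b_\xi(\delta)$ because $T$ does not branch at limit levels. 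This coincidence is the paper's pivotal observation ``$b(\delta)=[h(b)](\delta)$'': the $M$-side and $N$-side impose their clause-4 requirements on the \emph{same} node of $T_\delta$, and the requirements agree, which is what makes the single extension $f(b_\xi(\delta))=b_{\phi_q(\xi)}(\delta)$ legitimate --- and is simultaneously why injectivity of $\phi_q$ is the delicate point. Your proposed verification of clauses 4 and 2.c ``by applying $h$'' also cannot be carried out literally, since $\bar f\notin M$ and so $h(\bar f)$ is undefined; the correct route is through the twin-node identity above.
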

\begin{proof}
Let $G \subset \mathcal{P}$ be generic. We work in $\mathbf{V}[G]$. 
To see (1), assume $M$ is suitable for 
$\mathcal{F}$ and $M \cap K \in S$. 
Also let $\langle p_n=(f_n,\phi_n) : n \in \omega \rangle$ be a descending 
$(M, \mathcal{F})$-generic sequence and $\delta = M\cap \omega_1$. Note that 
$M\cap \omega_2=\bigcup_{n \in \omega} \dom(\phi_n)$ 
and $\bigcup_{n \in \omega} A_{p_n}$ is cofinal in $\delta$.
Now let $\phi_p=\bigcup_{n \in \omega} \phi_n$, and $f_p=\bigcup_{n \in \omega} f_n \cup f$, where 
$f(b_\xi(\delta))=b_{\phi_p(\xi)}(\delta)$ for all $\xi \in M \cap \omega_2$. 
This makes $p$ a lower bound for $\langle p_n: n \in \omega \rangle$,
since $M\cap K \in S$, and $\{b_\xi(\delta): \xi \in M\cap \omega_2 \}=T_\delta$.  

For (2), still in $\mathbf{V}[G]$, let $M,N, \langle p_n=(f_n,\phi_n) : n \in \omega \rangle$, 
and $h$ be as in 
Definition \ref{S-cic} with $M\cap \omega_1 = N\cap \omega_1 = \delta$. 
Since $h$ fixes the intersection $h(f_n)=f_n$ and
$b(\delta)=[h(b)](\delta)$, for all $b \in M\cap \mathcal{B}(T)$. 
Let $\phi_p=\bigcup_{n \in \omega} (\phi_n \cup h(\phi_n))$ 
and $f_p=\bigcup_{n \in \omega} f_n \cup f$, where 
$f(b_\xi(\delta))=b_{\phi_p(\xi)}(\delta)$ for all $\xi \in M \cap \omega_2$.

We need  to show that 
$p$ is a condition and a common lower bound for $\langle p_n:n\in \omega \rangle$ 
and its image under $h$. 
We will  prove  the map $b_\xi \mapsto b_{\phi_p(\xi)}$ preserves the order $\lex$. 
The rest of the requirements are obvious.
Let $\xi, \eta$ be in $(M \cup N) \cap \omega_2$ and $b_\xi \lex b_\eta$.
If one of $\xi$ or $\eta$ is in $M \cap N$, we are done. 
We are also done if $b_\xi(\delta) \neq b_\eta(\delta)$.
So assume that $\xi \in M $, $\eta \in N$, and  $b_\xi(\delta) = b_\eta(\delta)$.
By elementarity $\eta = h(\xi)$.
Fix $n \in \omega$ such that $\xi \in \dom(\phi_n)$.
Since $|X|= \aleph_1$ and $X \in M \cap N$, $M\cap X = N \cap X$. In particular, $\xi , \eta $ are not in $X$.
Let $t = b_\xi (\alpha_{p_n} +1)$  and $s$ be the  immediate successor of $f_{p_n}(b_\xi(\alpha_{p_n}))$  such that
$b_{\phi_n(\xi)} = \pi_{t,s}[b_\xi]$. Then $b_{h \phi_n (\eta)} = \pi_{t,s}[b_\eta]$. But $\pi_{t,s}$ preserves 
$\lex$, so $b_{\phi_n(\xi)} \lex b_{h \phi_n (\eta)} $. Hence, $\phi_p$ preserves $\lex$.
\end{proof}

\section{Proof of the main theorem} \label{last}

In this section we will finish the proof of Theorem \ref{main}. The strategy is to show that if
two $\aleph_1$-sized $L,L' \subset K$ have closure of cardinality $\aleph_2$, then they are isomorphic. 
Note that by Lemma \ref{small big suborders}, 
$K$ has non-$\sigma$-scattered suborders whose closure have 
cardinality $\aleph_1$. So in order to use the strategy mentioned above, we need to make these suborders 
$\sigma$-scattered by forcings for which the analogue of Lemma \ref{robust forcing} holds. We finish this 
section with a proof of Theorem \ref{main}.

\begin{defn}
Assume $L\subset K$, $|\bar{L}| \leq \aleph_1$. $\mathcal{P}_L(=\mathcal{P})$ is the poset consisting of 
all conditions $p:\alpha_p+1 \longrightarrow [\bar{L}]^\omega \cap \Omega(L)$ that are $\subset$-
increasing and continuous. 
\end{defn}

\begin{lem} \label{shooting}
Assume $S=\Omega(K)$, $\mathcal{Q}$ is an $S$-complete forcing, 
and $\dot{L}$ is a $\mathcal{Q}$-name 
for a suborder of $K$ whose closure has size $\leq \aleph_1$. Then
\begin{enumerate}

\item  $\Vdash ``\dot{\mathcal{P}_{L}}$ is $\check{S}$-complete",and 
\item $\Vdash ``\dot{\mathcal{P}_{L}}$ has the $\check{S}$-cic for $\check{\omega}_2$".

\end{enumerate}
\end{lem}
\begin{proof}
Let $G \subset \mathcal{Q}$ be generic. We work in $\mathbf{V}[G]$. To see (1), let $M$ be suitable for 
$\mathcal{P}$ and $M \cap K \in S$. It is enough to show that $M\cap \bar{L} \in \Omega(L)$. 
First note that $M$ does not capture any $x \in K \setminus M$ via
 cuts of countable cofinality or initiality. 
In order to see this, assume $M$ captures $x \in K$ via a cut $z$ where $z = \sup\{y_n : n \in \omega\}$
and $\langle y_n : n \in \omega \rangle$ is an increasing sequence in $K.$
Let $\alpha = \sup \{x \Delta y_n : n \in \omega \}$. Obviously $\alpha \in M \cap \omega_1$ and
we can find $x' \in K \cap M$ that is strictly in between $z $ and $x$. This contradicts the assumption that
$M$  captures $x$ via $z$.
 So if $M$ captures an element that is not in $M$, it has to capture it via a cut $z\in \hat{K}$
 of cofinality and coinitiality 
$\aleph_1$. But then $z$ determines a branch in $T$ which means that $z \in K$.

Now let $M$ capture $x \in L \setminus M$ via $z\in K\cap M$. 
We will show that $z \in \bar{L}$.
Note that $K \setminus \bar{L}$ is 
the union of a collection consisting of pairwise disjoint convex open subsets of $K$. So if 
$z \in (K \setminus \bar{L}) \cap M $ there is a convex open set $I$ containing $z$ which is in $ M$. 
Since $I \in M$ the endpoints of $I$ are in $M \cap \hat{K}$. But $M$ captures $x$ via a unique 
cut, so $z$ is an endpoint of $I$ which contradicts the fact that $I$ is open. 

For (2), note that if $h:M \longrightarrow N$ is an isomorphism that fixes $M\cap N$, then 
$h$ fixes $\bar{L}\cap M$ because $|\bar{L}|=\aleph_1$. So any lower bound for an $M$-generic sequence 
is a lower bound for an $N$-generic sequence. 
\end{proof}

Now we are ready to prove Theorem \ref{main}. Assume $\GCH$ holds in $\mathbf{V}$ and $T$ is the generic 
Kurepa tree from the forcing $\mathcal{H}$ in $\mathbf{V}^\mathcal{H}$. By Facts and Lemmas 
\ref{No A subtree}, \ref{No New branch}, \ref{nice tree}, \ref{robust forcing}, \ref{shooting}, and the work in \cite{second}
 there is countable support
iteration of forcings of length $\omega_2$ which is $\Omega(T)$-complete and extends $\mathbf{V}^\mathcal{H}$
to a model in which the following holds.
\begin{enumerate}
\item $T$ is club isomorphic to all of its everywhere Kurepa subtrees and has no Aronszajn subtree.
\item If $X,Y$ are two dense suborders of $K=(\mathcal{B}(T),\lex)$ and $|X|=|Y|=\aleph_1$ then 
$X$ embeds into $Y$ as a linear order.
\item  If $X\subset K$ and $|\bar{X}|\leq \aleph_1$ then $X$ is $\sigma$-scattered.

\end{enumerate}
Note that if $L\subset K, |L|=\aleph_1, |\bar{L}|=\aleph_2$, then there is $L_0\subset L$ such that 
$\bar{L}_0$ is $\aleph_2$-dense. 
To see this, for $b,b' \in L$, let 
$b \sim b'$ if there are at most 
$\aleph_1$ many elements of $\bar{L}$ in between $b,b'$. It is obvious that there are at least two 
distinct equivalence classes. 
We consider the set of equivalence classes as a linear order.
Here, the equivalence classes are ordered by the order of their elements. 
Since the equivalence classes are convex subsets of $L$, this order is well defied. 

The set of equivalence classes is $\aleph_1$-dense.
In order to see this,
let $b,b'$ be two non-equivalent elements of $L$ such that there are only 
countably many equivalence classes in between them.
Note that these equivalence classes are disjoint convex sets.
Let $\alpha \in \omega_1$ be large enough such that for each $t \in T_\alpha$ with 
$b \lex t \lex b'$, the set of all branches containing $t$ intersects at most one equivalence class. 
For each $c \in \bar{L} \cap (b,b')$ there exists $t \in T_\alpha$ with $b \lex t \lex b'$ such that $t \in c$.
So, there are only $\aleph_1$ many elements of $\bar{L}$ in between $b,b'$, which is a contradiction.
Now, let $L_0$ be a suborder of $L$ that intersects each equivalence class at  exactly one point. 
$\bar{L}_0$ is $\aleph_2$-dense. 
In order to see this, let $b \in \bar{L} \setminus \bar{L}_0.$
Fix $t \in b$ such that $\mathcal{B}(T_t)$ intersect at most one equivalence class.
Since, $| \bar{L} \cap \mathcal{B}(T_t) | \leq \aleph_1$ and there are at most $\aleph_1$ many such $t \in T$,
$|\bar{L} \setminus \bar{L}_0| \leq \aleph_1$.

Note that for such an $L_0$, the tree $\bigcup \bar{L}_0$ is an everywhere Kurepa subtree of $T$. So $L_0$
is isomorphic to an $\aleph_1$-sized dense suborder of $K$. This finishes the proof because all 
$\aleph_1$-sized dense suborders of $K$ are biembeddable.

We will finish the paper with some remarks about the iteration of the forcings we used.
The most important features of the forcings we used are $\Omega(T)$-completeness
and $\aleph_2$-chain conditions.
These forcings preserve the stationarity of stationary subsets of $\Omega(T)$, but they do not need to 
preserve the stationarity of stationary subsets of $\Gamma(T)$. In fact, some of the iterands we considered
shoot clubs into the complement of some stationary subsets of $\Gamma(T)$.
On the other hand the set $\Gamma(T)$
itself remains stationary in the final model we obtain, by Proposition \ref{char}. The only way to see that $\Gamma(T)$ is stationary is that
$\omega_2$ is preserved and consequently $K$ is not $\sigma$-scattered.
The phenomenon that only preserving $\omega_2$ without any control on countable 
structures which come from $\Gamma(T)$ guarantees that $\Gamma(T)$ remains stationary seems to be new 
and mysterious.
For instance, assume $S\subset \Gamma(T)$ is stationary and is not in the form of $\Omega$ or $\Gamma$ of any
suborder of $K$. Is there any way to determine whether or not $S$ remains stationary in the extension
under counatble support iterations of these forcings?  
\section*{Acknowledgments}
For continual guidance and encouragement and for
suggesting this project the author would like to thank Justin Tatch Moore.

The research presented in this paper was supported in part by NSF grants DMS-1262019 and DMS-1600635.


\begin{thebibliography}{10}

\bibitem{club_isomorphic}
U.~Abraham and S.~Shelah.
\newblock Isomorphism types of {A}ronszajn trees.
\newblock {\em Israel J. Math.}, 50(1-2):75--113, 1985.

\bibitem{reals_isomorphic}
J.~E. Baumgartner.
\newblock All {$\aleph \sb{1}$}-dense sets of reals can be isomorphic.
\newblock {\em Fund. Math.}, 79(2):101--106, 1973.

\bibitem{new_class_otp}
J.~E. Baumgartner.
\newblock A new class of order types.
\newblock {\em Ann. Math. Logic}, 9(3):187--222, 1976.

\bibitem{Fraisse}
R.~Fra\"{\i}ss\'{e}.
\newblock Sur la comparaison des types d'ordres.
\newblock {\em C. R. Acad. Sci. Paris}, 226:1330--1331, 1948.

\bibitem{no_real_Aronszajn}
T.~Ishiu and J.~T. Moore.
\newblock Minimality of non $\sigma$-scattered orders.
\newblock {\em Fund. Math.}, 205(1):29--44, 2009.

\bibitem{second}
H.~Lamei~Ramandi.
\newblock A minimal {K}urepa tree with respect to club embeddings.
\newblock {\em Fund. Math.}, 245(3):293--304, 2019.

\bibitem{first}
H.~Lamei~Ramandi and J.~Tatch~Moore.
\newblock There may be no minimal non-{$\sigma$}-scattered linear orders.
\newblock {\em Math. Res. Lett.}, 25(6):1957--1975, 2018.

\bibitem{Fraisse_otp_conj}
R.~Laver.
\newblock On {F}ra\"\i ss\'e's order type conjecture.
\newblock {\em Ann. of Math. (2)}, 93:89--111, 1971.

\bibitem{A-line_wqo}
C.~Martinez-Ranero.
\newblock Well-quasi-ordering {A}ronszajn lines.
\newblock {\em Fund. Math.}, 213(3):197--211, 2011.

\bibitem{minimal_unctbl_types}
J.~T. Moore.
\newblock $\omega_1$ and $-\omega_1$ may be the only minimal uncountable order
  types.
\newblock {\em Michigan Math. Journal}, 55(2):437--457, 2007.

\bibitem{proper_forcing}
S.~Shelah.
\newblock {\em {P}roper and {I}mproper {F}orcing}.
\newblock Springer-Verlag, Berlin, second edition, 1998.

\end{thebibliography}

\def\Dbar{\leavevmode\lower.6ex\hbox to 0pt{\hskip-.23ex \accent"16\hss}D}

\end{document}